\DeclareMathOperator{\Fl}{Fl}
\newcommand{\bx}{{\color{blue}x}}
\newcommand{\Wlarge}{W}
\newcommand{\QH}{\mathrm{QH}}
\DeclareFontFamily{OT1}{rsfs}{}
\DeclareFontShape{OT1}{rsfs}{n}{it}{<-> rsfs10}{}
\DeclareMathAlphabet{\mathscr}{OT1}{rsfs}{n}{it}
\newtheorem{thm}{Theorem}[section]
\newtheorem{lemma}[thm]{Lemma}
\newtheorem{prop}[thm]{Proposition}
\theoremstyle{definition} \newtheorem{defn}[thm]{Definition}}
\theoremstyle{remark} \newtheorem{remark}[thm]{Remark}
\newtheorem{example}[thm]{Example}}
\begin{document}

\title[]{Minimum quantum degrees with Maya diagrams}



\author{Ryan M. Shifler}

\address{
Department of Mathematical Sciences,
Henson Science Hall, 
Salisbury University,
Salisbury, MD 21801
}
\email{rmshifler@salisbury.edu}

\subjclass[2010]{Primary 14N35; Secondary 14N15, 14M15}

\begin{abstract}
We use Maya diagrams to refine the criterion by Fulton and Woodward for the smallest powers of the quantum parameter $q$ that occur in a product of Schubert classes in the (small) quantum cohomology of partial flags. Our approach using Maya diagrams yields a combinatorial proof that the minimal quantum degrees are unique for partial flags. Furthermore, visual combinatorial rules are given to perform precise calculations.
\end{abstract}

\maketitle


%
%

\section{Introduction}
Let $I=\{i_0:=0 < i_1<i_2< \cdots<i_k < i_{k+1}:=n\}$. Let $\Fl:=\Fl(I;n)$ denote the partial flag given by \[ \Fl(I;n):=\{0 \subset V_1 \subset V_2 \subset \cdots \subset V_k \subset \mathbb{C}^n: \dim V_j=i_j \}.\] Let $\QH^*(\Fl)$ be the small quantum cohomology with Schubert classes $\sigma_w$, $w \in W^{P}$. The set $W^{P}$ is the minimum length coset representative of the associated Weyl group $W$ modded out by a parabolic $P$ that corresponds to the set $I$. The set $W^P$ is defined in Section \ref{preliminaries}.  We denote the Poincare dual of $\sigma_v$ by $\sigma^v$ or $\sigma_{v^\vee}$. The small quantum cohomology ring $\QH^*(\Fl)$ is a graded $\mathbb{Z}[q]$-module. Multiplication is given by
\[ \sigma^v \star \sigma_w= \sum_{u,d\geq 0} c_{v^\vee,w}^{u,d}q^d\sigma_u\] where $c_{v^\vee,w}^{u,d}$ is the Gromov-Witten invariant that enumerates the rational curves of degree $d$. Given any element $\tau \in \QH^*(\Fl)$, we say that $q^d$ {\bf occurs} in $\tau$ if the coefficient of $q^d\sigma_w$ is not zero for some $w \in W^{P}$.

The purpose of the article is to use Maya diagrams to refine a criterion by Fulton and Woodward in \cite{FW} for the smallest powers of the quantum parameter $q$ that occur in a product of Schubert classes in the (small) quantum cohomology of partial flag. Using the moment graph, this requires many cases to be checked to find the minimum quantum degree. Maya diagrams reduce this process to a single calculation. The Maya diagram approach can be thought of as a generalization of removing rim hooks on the Young Tableau in the Grassmannian case presented in \cite{BERTRAM1999728}. The results in this article are combinatorial in the sense that we use Maya diagrams to describe the chains in the moment graph that Fulton and Woodward defined in \cite{FW} (see Proposition \ref{prop:chains}).

Furthermore, our approach using Maya diagrams yields a combinatorial proof that the minimal quantum degrees are, in fact, unique for partial flags. With geometric techniques, Postnikov proved that the minimum quantum degree is unique for $G/B$ in \cite{postnikov:qbruhat}*{Corollary 3}. This result was later extended to the general homogeneous space $G/P$ in \cite{BCLM:EulerChar} also using geometric techniques.  Minimum quantum degrees are also studied in \cites{postnikov:affine,Buch,yong, Belkale,Bar,ShiflerWithrow}.

Maya diagrams for partial flags give a characterization of the Bruhat order by slightly modifying a theorem by Proctor in \cite{ProctBO}*{Theorem 5A} and are stated herein as Proposition \ref{prop:bruhatorder}. Furthermore, the notion of generalized rim hooks is defined in Definition \ref{defn:genrimhook}. With these two key notions, there is a canonical lower bound for the minimal quantum degrees as stated in the Lemma \ref{lem:lbmdeg}. We then show that this lower bound is achieved in Theorem \ref{thm:lowerboundach}. Next, we begin with preliminaries to state and prove our main results. We expect analogous results to hold for types B, C, and D. Since the Weyl groups are different for each type, the definition of the Maya diagrams will need to be modified. Next, we begin with preliminaries to state and prove our main results.

{\em Acknowledgements.} I would like to thank Hiroshi Naruse for very useful correspondences. I would also like to thank the anonymous referees for very useful suggestions. 

\section{Preliminaries} \label{preliminaries}

Let $I=\{i_0:=0 < i_1<i_2< \cdots<i_k < i_{k+1}:=n\}$. Let $\Fl:=\Fl(I;n)$ denote the partial flag given by \[ \Fl(I;n):=\{0 \subset V_1 \subset V_2 \subset \cdots \subset V_k \subset \mathbb{C}^n: \dim V_j=i_j \}.\]

Consider the root system of type $A_{n-1}$ with positive roots $R^+=\{e_l-e_m: 1 \leq l < m \leq n \}$ and the subset of simple roots $\Delta=\{\alpha_l:=e_l-e_{l+1}:1 \leq l \leq n-1 \}$. The associated Weyl group $W$ is $S_n.$ For $1 \leq l \leq n-1$ denote by $s_l$ the simple reflection corresponding to the root $e_l-e_{l+1}$. Each $I=\{i_0:=0 < i_1<i_2< \cdots<i_k <i_{k+1}:=n\}$ determines a parabolic subgroup $P_I$ with the Weyl group $W_{P_I}=\left<s_l: l \neq i_j \right>$ generated by reflections with indices not in $I$. We will define $P:=P_I$ for notational ease. Let $\Delta_P:=\{ \alpha_{i_s}: i_s \notin \{i_1, \cdots, i_k \} \}$ and $R^+_P:=\mbox{Span} \Delta_P \cap R^+$; these are the positive roots of $P$. Let $\alpha \in R^+ \backslash R^+_P$. Then $\alpha+\Delta_P$ is the sum of simple roots in $R^+ \backslash R^+_P$ given by \[\alpha + \Delta_P= \sum_{j=1}^k d_j(e_{i_{j}}-e_{i_{j+1}}) +\Delta_P \] For notational ease, we will denote this sum by the $k$-tuple $(d_1,d_2,\cdots,d_k)$. Let $\ell: W \rightarrow \mathbb{N}$ be the length function and denote by $W^{P}$ the set of minimal length representatives of the cosets of $W/W_{P}$. The length function descends to $W/W_{P}$ by $\ell(uW_{P})=\ell(u')$ where $u' \in w^P$ is the minimal length representative of the coset $uW_{P}$. We have a natural ordering $1<2< \cdots <n$. Since $w(i_k+1)<\cdots<w(n)$ are determined, we will identify the elements of $W^{P}$ with \[\left(w(1)<\cdots<w(i_1)| w(i_1+1)<\cdots<w(i_2)| \cdots|w(i_{k-1}+1)<\cdots<w(i_k)\right).\] Furthermore, there are times in the paper where we need to consider coset representatives not in $W^P$; in those instances we write \[\left(w(1),\cdots,w(i_1)| w(i_1+1),\cdots,w(i_2)| \cdots|w(i_{k-1}+1),\cdots,w(i_k)\right)\] where the entries between the vertical bars may be interchanged.

\subsection{Chains}
An edge in the moment graph corresponds to a torus stable curve of a fixed degree. A chain along those edges corresponds to a torus stable curve where the degree is the sum of the edge degrees in the chain. So, studying chains in the moment graph gives us information about curves in the flag variety. Here we will follow the exposition of \cite{FW} and specialize to the case of partial flags. We say that two unequal elements $v$ and $w$ in $W^{P}$ are {\bf adjacent} if there is a reflection $s_{e_{{l}}-e_{m}} \in W$ such that $w=vs_{e_l-e_m}$. The reflection $e_{l}-e_{m}$ is the sum of simple reflections in $R^+\backslash R^+_P$. That is, if $i_{a-1}+1 \leq l \leq i_a$ and $i_{b-1}+1 \leq m\leq i_{b}$ then define
\begin{eqnarray*}
d(v,w)&=&e_l-e_m+\Delta_P=(e_{i_a}-e_{i_a+1})+\cdots+(e_{i_{b-1}}-e_{i_{b-1}+1})+\Delta_P\\
&=&(\overbracket{0,\cdots,0}^{a-1},\overbracket{1,\cdots,1}^{b-a},\overbracket{0,\cdots,0}^{k+1-b}).
\end{eqnarray*}

Define a {\bf chain} $\mathcal{C}$ from $v$ to $w$ in $W^{P}$ to be a sequence $u_0,u_1, \cdots, u_r$ in $W^{P}$ such that $u_{i-1}$ and $u_i$ are adjacent for $1 \leq i \leq r$ and $u_0 \leq v$ and $w \leq u_r$. We say that the chain {\bf originates} at $u_0$ and {\bf terminates} at $u_r$. For any chain $u_0,u_1, \cdots, u_r$ we define the {\bf degree} of the chain $\mathcal{C}$, denoted $\deg_{\mathcal{C}}(v,w)$, to be the sum of the degrees $d(u_{i-1},u_i)$ for $1 \leq i \leq r$. Note that there is a chain of degree 0 between $v$ and $w$ exactly when $w \leq v$.

\subsection{Quantum Cohomology}
Let $\QH^*(\Fl)$ denote the quantum cohomology ring of $\Fl$. The Schubert classes $\sigma_w$, $w \in W^{P}$, form a basis. Let $\sigma^w:=\sigma_w^\vee$ be the Poincare dual of $\sigma_w$ for any $w\in W^{P}$. Take a variable $q_j$ for each $i_j \in I$ with $1 \leq j \leq k$, and let $\mathbb{Z}[q]$ be the polynomial ring with these $q_j$ as indeterminates where $\deg q_j=i_{j+1}-i_{j-1}$. For a degree $d=(d_1, \cdots, d_k)$ that corresponds to $\sum_{j=1}^{k}d_j \sigma^{s_{i_j}} \in H_2(\Fl)$ (this is an integral sum of curve classes), we write $q^d=\Pi_{j=1}^{k}q_j^{d_j}$. The small quantum cohomology ring $\QH^*(\Fl)$ is a graded $\mathbb{Z}[q]$-module. The multiplication is given by

\[ \sigma^v \star \sigma_w= \sum_{u,d\geq0} c_{v^\vee,w}^{u,d}q^d\sigma_u\] where $c_{v^\vee,w}^{u,d}$ is the Gromov-Witten invariant that enumerates the degree $d$ rational curves. See \cite{buch:partial} for details.

\begin{remark}
The $q$ indeterminates would be $q_1, \cdots, q_k$ with the degree coming from $I$. So, different subsets of $I$ with $k$ elements correspond to the same set of indeterminates but different grading.
\end{remark}

\subsection{Hecke Product} The purpose of using the Hecke product comes from the work of Buch and Mihalcea in \cite{buch.m:nbhds}. In particular, they use the Heck product to calculate curve neighborhoods of Schubert varieties which are the closures of degree $d$ rational curves that intersect a given Schubert variety. The curve neighborhood behavior is intimately related to minimum quantum degrees, which motivates the use of Heck products in this manuscript.

The Weyl group $\Wlarge$ admits a partial order $\leq$ given by the {\bf Bruhat order}. Its covering relations are given by $w < ws_\alpha$ where $\alpha \in R^+$ is a root and $\ell(w) < \ell(w s_\alpha)$. We will use the {\bf Hecke product} on the Weyl group $\Wlarge$. For a simple reflection $s_i$ the product is defined by 
\begin{align*}
    w \cdot s_i =   \begin{cases}
                        w s_i & \text{ if $\ell(ws_i)>\ell(w)$;} \\
                        w & \text{otherwise.}
                    \end{cases}
\end{align*}
If $v=s_{i_1}s_{i_2}\cdot \cdots \cdot s_{i_t}$ then $w\cdot v= w \cdot s_{i_1} \cdot s_{i_2} \cdot  \ldots \cdot s_{i_t}$. It is shown in \cite{buch.m:nbhds}*{Section 3} that this product is independent of the reduced expression chosen for $v$. The Hecke product gives $\Wlarge$ a structure of an associative monoid; see, e.g.,~\cite{buch.m:nbhds}*{\S 3} for more details. 
For any parabolic group $P$, the Hecke product determines a left action $W \times W/\Wlarge_P \to W/\Wlarge_{P}$ defined by 
\[
    u \cdot (w \Wlarge_{P}) = (u \cdot w) \Wlarge_P. 
\]
See the paragraph following \cite{buch.m:nbhds}*{Proposition 3.2}.

\subsection{Fulton and Woodward's formula for minimal quantum degrees} Given any element $\tau \in \QH^*(\Fl)$, we say that $q^d$ {\bf occurs} in $\tau$ if the coefficient of $q^d\sigma_w$ is not zero for some $w$. The following result provides an equivalent definition to degrees in terms of chains in the Bruhat graph.

\begin{prop} \label{prop:chains} \cite{FW}*{Theorem 9.1}
Let $v,w \in W^{P}$, and let $d$ be a degree. The following are equivalent:
\begin{enumerate}
    \item There is a degree $c \leq d$ such that $q^c$ occurs in $\sigma^v \star \sigma_w$.
    \item There is a chain of degree $c \leq d$ between $v$ and $w$.
\end{enumerate}
\end{prop}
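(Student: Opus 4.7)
The plan is to prove both directions via the geometry of rational curves in $\Fl$, with chains matched to reducible stable maps whose irreducible components are distinguished adjacency curves. The key geometric input is that each adjacency $u_{i-1}, u_i$ given by the reflection $s_{e_a - e_b}$ comes with a specific rational curve of degree $d(u_{i-1}, u_i)$ connecting points of $X_{u_{i-1}}$ and $X_{u_i}$; this curve is (the image in $\Fl$ of) the closure of a one-parameter orbit of the root $\mathrm{SL}_2$ attached to $e_a - e_b$, and its class is exactly $0^{q-1}1^{t-q}0^{k+1-t}$.

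For (2) $\Rightarrow$ (1), I would induct on the chain length $r$. When $r=0$ we have $w \leq u_0 \leq v$, so $q^0$ occurs already in the classical product and we may take $c = 0$. For $r \geq 1$, string the $r$ adjacency curves $C_1, \ldots, C_r$ associated with the adjacent pairs $(u_{i-1}, u_i)$ into a connected reducible genus-zero stable map $f: C \to \Fl$ of total degree $\deg_{\mathcal{C}}(v,w) = \sum d(u_{i-1}, u_i)$, with two of the three marked points landing on general translates of $X_v$ and $X_w$ via the end components meeting $u_0$ and $u_r$. A deformation/gluing argument on $\Mb_{0,3}(\Fl, c')$, using Behrend–Fantechi virtual classes and the Kleiman transversality available on the homogeneous space $\Fl$, then forces some Gromov–Witten invariant $c_{v^\vee, w}^{u, c'}$ with $c' \leq \deg_{\mathcal{C}}(v,w) \leq d$ to be nonzero.

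For (1) $\Rightarrow$ (2), start from a stable map $f: C \to \Fl$ realizing a nonzero contribution to $c_{v^\vee, w}^{u, c}$, with three marked points $p_v, p_w, p_u$ hitting sufficiently general translates of $X_v$, $X_w$, $X_u$. Specialize one translate at a time to make the inverse image $f^{-1}(X_v)$ and $f^{-1}(X_w)$ land on opposite endpoints of $C$, then decompose $C$ into irreducible components. Minimality of the contribution, together with the Chevalley structure of the cone of effective curves, forces each component to be an adjacency curve of degree $d(u'_{i-1}, u'_i)$ for some adjacent pair. Reading off the minimal length coset representative of the image of each node produces a chain $u_0, u_1, \ldots, u_r$ in $W^{P_I}$ with $u_0 \leq v$, $w \leq u_r$, and total degree $\sum d(u_{i-1}, u_i) \leq c \leq d$.

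The main obstacle is the second direction, specifically the control of the decomposition step: a priori an irreducible component of $C$ may map to a curve of higher degree than $d(u', u'')$ for its two endpoint cosets, or it may be contracted, and one must show that a degree-minimal representative can be chosen so that every component is in fact an adjacency curve. This amounts to verifying a structure theorem for minimal degree rational curves joining two Schubert varieties in $\Fl(I;n)$, which in turn reduces to computing the image of a root $\mathrm{SL}_2$-orbit under the projection $G \to G/P_I$ and checking that its class agrees with the formula $0^{q-1}1^{t-q}0^{k+1-t}$. Once this structural result is secured, the matching of chain degrees to curve degrees is bookkeeping, and the two inequalities $c' \leq \deg_{\mathcal{C}}(v,w)$ and $\deg_{\mathcal{C}}(v,w) \leq c$ together close the equivalence.
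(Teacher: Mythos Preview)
The paper does not give its own proof of this proposition; it is quoted verbatim as \cite[Theorem 9.1]{FW} and used as a black box. So there is nothing in the present paper to compare your argument against, and the relevant comparison is with Fulton--Woodward's original proof.

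Your outline is in the right spirit but misses the decisive mechanism in the direction $(1)\Rightarrow(2)$. You propose to take a stable map contributing to a nonzero invariant, ``specialize one translate at a time,'' and then read off a chain from the irreducible components. The problem is exactly the obstacle you flag but do not resolve: an irreducible component of an arbitrary stable map has no reason to be an adjacency curve, and minimality of the degree by itself does not force this. What Fulton--Woodward actually use is the torus action: one degenerates (via properness of $\Mb_{0,3}(\Fl,c)$ and a one-parameter subgroup of $T$) to a $T$-invariant stable map. Every irreducible $T$-invariant rational curve in $\Fl$ joins two $T$-fixed points that differ by a single reflection $s_{e_a-e_b}$, and its class is precisely $d(u,us_{e_a-e_b})$. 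Reading off the $T$-fixed points along the nodal chain then produces the desired sequence $u_0,\ldots,u_r$. Without invoking $T$-fixed curves your decomposition step is a genuine gap.

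For $(2)\Rightarrow(1)$ your plan is closer to Fulton--Woodward's, but the phrase ``a deformation/gluing argument \ldots\ forces some Gromov--Witten invariant \ldots\ to be nonzero'' hides a nontrivial positivity statement. The existence of a single reducible curve meeting general translates of $X^v$ and $X_w$ does not by itself give a nonzero virtual count; Fulton--Woodward instead prove (their Lemma~7.2/Theorem~7.1) that the set of degrees $c$ for which a connecting curve exists coincides with the set of degrees for which some $q^{c'}$ with $c'\le c$ occurs, using a separate divisor-class and Kleiman-transversality argument rather than gluing on moduli. Your sketch would need that input, or an equivalent positivity result, to close.
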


\section{Maya Diagrams}

In this section, we give the definition of Maya diagrams. Maya diagrams appear in different contexts. (see, e.g., \cites{DATE1989149,RL,clarkson2019cyclic}). We will also describe the Bruhat order in terms of Maya diagrams.

\begin{defn} \label{defn:MayaDiagrams}
Let $w \in W^{P}$. The Maya diagram $M^w$ corresponding to $w$ is an $(k+1)\times n$ grid with the southwest corner chosen to be $(1,1)$ box and we index with $(\mbox{rows},\mbox{columns})$. We place an $x$ in the $(y,w(i))$ position for $1 \leq j \leq k+1$, $1 \leq i \leq i_j,$ and $j \leq y \leq k+1$. We color the bottom $x$ of each column black with all other $x$'s blue. We denote the row indexed by $y$ as $m^w_y$. 
\end{defn}

Each row corresponds to an increasing interval in the permutation, and you can read out the one-line notation by following the black $x$'s row by row from the bottom.

\begin{example} \label{mayaex}The minimal length representatives $w=(1|5<9|10<11|4<6|2<7)$ and $v=(2|7<11|10<12|8<9|1<5)$ corresponds to the Maya diagrams
\[ M^w=\scalebox{.75}{\ytableausetup{centertableaux}
\begin{ytableau}
\bx   & \bx & x  & \bx& \bx   &\bx& \bx & x & \bx & \bx &\bx & x \\
\bx   & x &   & \bx& \bx   &\bx&x &  & \bx & \bx &\bx &  \\
\bx   &   &   & x  & \bx   & x &  &  & \bx &\bx  &\bx &\\
\bx   &   &   &    & \bx   &   &  &  & \bx &x    &x   &\\
\bx   &   &   &    & x     &   &  &  & x   &     &    &\\
  x   &   &   &    &       &   &  &  &     &     &    &
\end{ytableau}\ytableausetup{centertableaux}}  \mbox{ and }  
  M^v=\scalebox{.75}{\ytableausetup{centertableaux}
\begin{ytableau}
\bx  & \bx & x  & x &\bx  & x  & \bx &\bx  &\bx  &\bx & \bx   &\bx \\
 x  & \bx &   &  &x  &   & \bx &\bx  &\bx  &\bx & \bx   &\bx \\
   & \bx  &   &  &   &   & \bx &x    &x    &\bx & \bx   &\bx \\
   & \bx  &   &  &   &   & \bx &     &     &x   & \bx   &x   \\
   & \bx  &   &  &   &   & x   &     &     &    & x     &    \\
   & x    &   &  &   &   &     &     &     &    &       &
\end{ytableau}\ytableausetup{centertableaux}}.\] In $M^w$, a black $x$ is placed in the bottom row and the 1st column; two black $x$'s are placed in the 2nd row up in the 5th and 9th columns. This corresponds to the $1,5$ and $9$ in $w=(1|5<9|10<11|4<6|2<7)$.
\end{example}

Let $M^w$ be the Maya diagram corresponding to $w \in W^{P}$ and let $1 \leq y \leq k$. Let $\pi_y:W^{P} \rightarrow W^{P_{i_y}}$ denote the natural projection. Then $M^{\pi_y(w)}$ is a Maya diagram with two rows and $n$ columns, with the top row having an $x$ in each position and the bottom row is $m^w_y$.

\begin{example}
    {\[ M^w=\scalebox{.75}{\ytableausetup{centertableaux}
\begin{ytableau}
\bx   & \bx & x  & \bx& \bx   &\bx& \bx & x & \bx & \bx &\bx & x \\
\bx   & x &   & \bx& \bx   &\bx&x &  & \bx & \bx &\bx &  \\
\bx   &   &   & x  & \bx   & x &  &  & \bx &\bx  &\bx &\\
\bx   &   &   &    & \bx   &   &  &  & \bx &x    &x   &\\
\bx   &   &   &    & x     &   &  &  & x   &     &    &\\
  x   &   &   &    &       &   &  &  &     &     &    &
\end{ytableau}\ytableausetup{centertableaux}}  \mbox{ and }  
  M^{\pi_3(w)}=\scalebox{.75}{\ytableausetup{centertableaux}
\begin{ytableau}
\bx  & x & x  & x &\bx  & x  & x &x  &\bx  &\bx & \bx   &x \\
 x  &  &   &  &x  &   &  &  &x  &x &x    &
\end{ytableau}\ytableausetup{centertableaux}}.\]}
\end{example}

\subsection{Bruhat order with Maya diagrams}\label{subs:BruhatOrder}
We begin the subsection with technical definitions.
\begin{defn}
Let $w,v \in W^P$. Let $M^w$ be the Maya diagram that corresponds to $w \in W^{P}$. 
\begin{enumerate}
\item Define 
\[ f(M^w,y,z):=\begin{cases} 
      x & y=j, z=w(i), \mbox{ for some } i,j \mbox{ with } 1 \leq i \leq i_j \mbox{ and }1\leq j \leq k+1; \\
     0 & \mbox{otherwise}
   \end{cases}.
\]
\item Define $S_y(M^w,z):=\#\{i: f(M^w,y,i)=x \mbox{ for }1 \leq i \leq z \}$. 
\item We say that $M^w \leq M^v$ if  $S_y(M^w,z) \geq S_y(M^v,z)$ for all $y$ and $z$ such that $1 \leq z \leq n$ and $1 \leq y \leq k+1$. 
\end{enumerate}
\end{defn}

\begin{example}
Consider $w=(1|5<9|10<11|4<6|2<7)$. Then we have

\[ M^w=\scalebox{.75}{\ytableausetup{centertableaux}
\begin{ytableau}
\bx   & \bx & x  & \bx& \bx   &\bx& \bx & x & \bx & \bx &\bx & x \\
\bx   & x &   & \bx& \bx   &\bx&x &  & \bx & \bx &\bx &  \\
\bx   &   &   & x  & \bx   & x &  &  & \bx &\bx  &\bx &\\
\bx   &   &   &    & {\color{brown}x}   &   &  &  & \bx &x    &x   &\\
\bx   &   &   &    & x     &   &  &  & x   &     &    &\\
  x   &   &   &    &       &   &  &  &     &     &    &
\end{ytableau}\ytableausetup{centertableaux}}. \]

We are considering the conditions where $y=j, z=w(i), \mbox{ for some } i,j \mbox{ with } 1 \leq i \leq i_j \mbox{ and }1\leq j \leq k+1$. Here, $i_1=1$, $i_2=3$ and $w(2)=5$. This means the fifth column is marked with an $x$ in the second row since $1 \leq 2 \leq i_2$ but $2>i_1$. Since $1 \leq 2 \leq i_j$ for $2 \leq j \leq k+1$, there is an $x$ in the fifth column and $j$th row for $2 \leq j \leq k+1.$ In particular, have that ${\color{brown}f(M^w,3,5)=x}$ which corresponds to the ${\color{brown}x}$ in $M^w$.

\end{example}

\begin{example} Recall the Maya diagrams from Example \ref{mayaex}.

\[ M^w=\scalebox{.75}{\ytableausetup{centertableaux}
\begin{ytableau}
\bx   & \bx & x  & \bx& \bx   &\bx& \bx & x & \bx & \bx &\bx & x \\
\bx   & x &   & \bx& \bx   &\bx&x &  & \bx & \bx &\bx &  \\
\bx   &   &   & x  & \bx   & x &  &  & \bx &\bx  &\bx &\\
{\color{brown} x}   &   &   &    &{\color{brown} x}   &   &  &  &{\color{brown} x} &x    &x   &\\
\bx   &   &   &    & x     &   &  &  & x   &     &    &\\
 {\color{violet} x}   &   &   &    &       &   &  &  &     &     &    &
\end{ytableau}\ytableausetup{centertableaux}}  \mbox{ and }  
  M^v=\scalebox{.75}{\ytableausetup{centertableaux}
\begin{ytableau}
\bx  & \bx & x  & x &\bx  & x  & \bx &\bx  &\bx  &\bx & \bx   &\bx \\
 x  & \bx &   &  &x  &   & \bx &\bx  &\bx  &\bx & \bx   &\bx \\
   & \bx  &   &  &   &   & \bx &x    &x    &\bx & \bx   &\bx \\
   & {\color{red}x}  &   &  &   &   &{\color{red}x} &     &     &x   & \bx   &x   \\
   & \bx  &   &  &   &   & x   &     &     &    & x     &    \\
 \colorbox{cyan}{\strut}  & x    &   &  &   &   &     &     &     &    &       &
\end{ytableau}\ytableausetup{centertableaux}}\]

Next we have a few examples of computations color coordinated to match with the $x$ that are being counted in the Maya diagrams. \[ {\color{violet} S_1(M^w,1)=1 } \geq 0 = S_1(M^v,1) \mbox{ and } {\color{brown} S_3(M^w,9)=3 } \geq {\color{red} 2 = S_3(M^v,9) }.\]

In this example, we have that $M^w \leq M^v$.
\end{example}

Next we present a proposition that relates the Bruhat order on $W^P$ with the partial order on Maya diagrams. This is another way of presenting the result in \cite{ProctBO}*{Theorem 5A}.

\begin{prop} \label{prop:bruhatorder}\cite{ProctBO}*{Theorem 5A}
Let $w,v \in W^{P}$. Then $w \leq v$ if and only if $M^w \leq M^v$.
\end{prop}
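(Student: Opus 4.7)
The plan is to unwind the Maya diagram inequality $M^w \leq M^v$ into the classical partial-sum (``tableau'') inequality that characterizes the Bruhat order on $W^{P_I}$, and then cite Proctor's Theorem 5A directly.

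First I would unpack $S_j$. By Definition \ref{defn:MayaDiagrams}, row $j$ of $M^w$ carries an $x$ precisely at columns $w(1) < w(2) < \cdots < w(i_j)$, since the entries in that window are forced to be increasing for $w \in W^{P_I}$. Hence
\[
S_j(M^w, b) \;=\; \#\{\, 1 \leq i \leq i_j : w(i) \leq b \,\}.
\]
Substituting into the definition of $M^w \leq M^v$, this inequality is equivalent to
\[
\#\{\, i \leq i_j : w(i) \leq b \,\} \;\geq\; \#\{\, i \leq i_j : v(i) \leq b \,\}
\]
for every $1 \leq j \leq k+1$ and every $1 \leq b \leq n$. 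The row $j = k+1$ reduces to the tautology $b \geq b$, so only the rows $j = 1, \ldots, k$ carry information.

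Next I would invoke Proctor's Theorem 5A, which characterizes the Bruhat order on $W^{P_I}$ in exactly this partial-sum form: $w \leq v$ in $W^{P_I}$ if and only if the ``window'' $\{w(1), \ldots, w(i_j)\}$ is componentwise dominated by $\{v(1), \ldots, v(i_j)\}$ for each $j$. Componentwise dominance of two increasing sequences $a_1 < \cdots < a_{i_j}$ and $a_1' < \cdots < a_{i_j}'$, i.e.\ $a_s \leq a_s'$ for all $s$, is in turn equivalent to $\#\{s : a_s \leq b\} \geq \#\{s : a_s' \leq b\}$ for all $b$. Applied with $a_s = w(s)$ and $a_s' = v(s)$, this is precisely the partial-sum inequality above, so the Maya diagram inequality coincides with $w \leq v$.

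The only real obstacle is the bookkeeping needed to reconcile Proctor's conventions (indexed subsets ordered by elementwise $\leq$) with ours (left-to-right row sums in a grid), but this is the routine double-counting argument just described. Once this translation is made, the proposition is an immediate restatement of Proctor's theorem in the Maya diagram language.
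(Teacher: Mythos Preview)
Your proposal is correct and matches the paper's own treatment: the paper does not give a separate proof but simply declares the proposition to be a modification of \cite[Theorem~5A]{ProctBO}, and what you have written is exactly the translation needed to see that the Maya-diagram inequality $S_j(M^w,b)\geq S_j(M^v,b)$ is Proctor's tableau criterion rephrased. The only thing to add is that your unpacking of $S_j(M^w,b)$ as $\#\{i\le i_j:w(i)\le b\}$ and the equivalence between componentwise domination of increasing sequences and the partial-sum inequality are precisely the ``slight modification'' the paper alludes to but does not spell out.
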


\section{Maya diagram combinatorics}

In the section, we will give the definition of the generalized rim hook rule and study chains in terms of Maya diagrams.

\subsection{Generalized rim hook rule} \label{subs:genrimhook}

Maya diagrams give a way to see a generalized rim hook rule. The generalized $(a,b)$-rim hook connects the combinatorics of the Maya diagram and Heck product to curves of degree \[(\overbracket{0,\cdots,0}^{a-1},\overbracket{1,\cdots,1}^{b-a},\overbracket{0,\cdots,0}^{k+1-b}).\] This can be thought of as a generalization of removing rim hooks on Young Tableau in the Grassmannian case that is presented in \cite{BERTRAM1999728}. We now define the generalized rim hook rule.

\begin{defn}
Let $v \in W^{P}$ and let $M^v$ be the corresponding Maya diagram. For $1 \leq y \leq k$ define \[ \phi(M^v,y):=\min \{z:f(M^v,y,z)=x \mbox{ and } f(M^v,y-1,z)=0 \},\] 
\[\psi(M^v,y):=\max \{z:f(M^v,y+1,z)=x \mbox{ and } f(M^v,y,z)=0 \}.\] 
\end{defn}

\begin{example}
Consider the following Maya diagram.
\[M^v=\scalebox{.75}{\ytableausetup{centertableaux}
\begin{ytableau}
\bx  & \bx & x  & x &\bx  & x  & \bx &\bx  &\bx  &\bx & \bx   &\bx \\
 x  & \bx &   &  &{\color{purple}x}  &   & \bx &\bx  &\bx  &\bx & \bx   &\bx \\
   & \bx  &   &  &   &   & \bx &x    &x    &\bx & \bx   &\bx \\
   & \bx  &   &  &   &   & \bx &     &     &{\color{green}x}   & \bx   &x   \\
   & \bx  &   &  &   &   & x   &     &     &    & x     &    \\
   & x    &   &  &   &   &     &     &     &    &       &
\end{ytableau}\ytableausetup{centertableaux}}.\]
Here ${\color{green}\phi(M^v,3)=10}$ and ${\color{purple} \psi(M^v,4)=5 }$ where the colors correspond to the $x$ in the definitions of $\phi$ and $\psi$.
\end{example}

Next we define the generalized rim hook rule.

\begin{defn} \label{defn:genrimhook}

Let $a,b$ be such that $1 \leq a<b \leq k+1$, and let $M^v$ be a Maya diagram for $v \in W^P$. We define the {\it generalized $(a,b)$-rim hook} as the Maya diagram obtained by the following process:

\begin{enumerate}
\item Let $M^v_{\uparrow a}:=M^v$. 

\item For $a \leq j \leq b-1$, define $M^v_{\uparrow j+1}$ from $M^v_{\uparrow j}$ by removing the $x$ in position $(j,\phi(M^v_{\uparrow j},j)).$

\item Let $M^v_{\downarrow b}:=M^v_{\uparrow b}$. 

\item For $b-1 \geq j \geq a$, define $M^v_{\downarrow j}$ from $M^v_{\downarrow j+1}$ by adding an $x$ to position $(j,\psi(M^v_{\downarrow j},j))$.

\item The completed generalized $(a,b)$-rim hook is given by $M^v_{\downarrow a}$.
\end{enumerate}
We refer to this process as the {\it $(a,b)$-rim hook rule.}
\end{defn}

Definition \ref{defn:genrimhook} is well-defined since we are ensuring that the number of $x$'s in each row is the same at the beginning and the end of the process and that any $x$, except those in the top row, has an $x$ above it.

\begin{remark}
    The definition \ref{defn:genrimhook} corresponds to the rim hook for Grassmannians (i.e., the case $k=1$). See \cite{FW}.
\end{remark}

\begin{example} \label{ex:grass} This example is in the Grassmannian $\Fl(\{0<8<12\};12)$ with \[v=(1<2<3<5<8<9<11<12) \in W^P.\] Here we will calculate the generalized $(1,2)$-rim hook. The symbols $\uparrow$ and $\downarrow$ describe the movement of the $x$'s.
\begin{eqnarray*}
M^v=\scalebox{.75}{\ytableausetup{centertableaux}
\begin{ytableau}
\bx&\bx&\bx & x   & \bx     &  x & x &\bx &\bx  &x & \bx&\bx  \\
x&x&x &    & x     &   &  &x & x & & x&x 
\end{ytableau}\ytableausetup{centertableaux}} &\overset{\strut}{\longrightarrow}&
\scalebox{.75}{\ytableausetup{centertableaux}
\begin{ytableau}
x&\bx& \bx       & x   & \bx & x  & x &\bx &\bx  &\downarrow & \bx         &\bx  \\
\uparrow &x& x       &    & x &   &  &x & x  & x & x         &x 
\end{ytableau}\ytableausetup{centertableaux}}
\end{eqnarray*}
\begin{eqnarray*}
&=&\scalebox{.75}{\ytableausetup{centertableaux}
\begin{ytableau}
x&\bx& \bx     & x  & \bx         & x  & x &\bx &\bx  &\bx & \bx &\bx \\
&x& x      &    & x         &   &  &x &x  &x & x &x 
\end{ytableau}.\ytableausetup{centertableaux}}
\end{eqnarray*}
\end{example}

\begin{example} The following is an example of a generalized $(2,6)$-rim hook in \[ \Fl(\{0<1<3<5<7<9<12\};12).\] Here $v=(2|3<8|10<12|9<11|1<5) \in W^P$.
\begin{eqnarray*}
M^v=\scalebox{.75}{\ytableausetup{centertableaux}
\begin{ytableau}
\bx&\bx&\bx & x   & \bx     &  x & x &\bx &\bx  &\bx & \bx&\bx  \\
x&\bx&\bx &    & x     &   &  &\bx &\bx  &\bx & \bx&\bx  \\
 &\bx&\bx&    &       &   &  &\bx & x   & \bx& x  &\bx \\
 &\bx&\bx&    &       &   &  &\bx&     & x  &    &x\\
 &\bx& x &    &       &   &  &x  &     &    &    & \\
 &x  &    &    &       &   &  &    &     &    &    &
\end{ytableau}\ytableausetup{centertableaux}} &\overset{\strut}{\longrightarrow}&
\scalebox{.75}{\ytableausetup{centertableaux}
\begin{ytableau}
x&\bx& \bx       & x   & \bx & x  & \downarrow &\bx &\bx  &\bx & \bx         &\bx  \\
\uparrow&\bx& x       &    & x &   & \downarrow &\bx &\bx  &\bx & \bx         &\bx  \\
 &\bx&\uparrow &    &            &   & x &\bx & x   & \bx& \downarrow &\bx \\
 &\bx&\uparrow &    &             &   &  &\bx&     & x  & x           & \downarrow \\
 &\bx&\uparrow&    &             &   &  &x  &     &    &             &x \\
 &x  &         &    &             &   &  &    &     &    &             &
\end{ytableau}\ytableausetup{centertableaux}}
\end{eqnarray*}
\begin{eqnarray*}
&=&\scalebox{.75}{\ytableausetup{centertableaux}
\begin{ytableau}
x&\bx& \bx       & x   & \bx & x  & \bx &\bx &\bx  &\bx & \bx         &\bx  \\
&\bx& x       &    & x &   & \bx &\bx &\bx  &\bx & \bx         &\bx  \\
 &\bx& &    &            &   & x &\bx & x   & \bx& \bx &\bx \\
 &\bx& &    &             &   &  &\bx&     & x  & x           & \bx \\
 &\bx& &    &             &   &  &x  &     &    &             &x \\
 &x  &         &    &             &   &  &    &     &    &             &
\end{ytableau}\ytableausetup{centertableaux}}.
\end{eqnarray*}
\end{example}

\subsection{Chains in terms of Maya diagrams } \label{ssect:Chains in terms of Maya diagrams}
The first lemma connects the generalized $(a,b)$-rim hook rule to the Hecke product and constructs a chain. We will now state a technical definition before describing Chains in terms of Maya diagrams.

\begin{defn}
    We say that two postive roots $e_a-e_b,e_c-e_d \in R^+$ intersect at most at an end point if $a<b\leq c<d$ or $c<d \leq a<b$.
\end{defn}

\begin{defn}
Let $\{ \lambda_q\}_{q=1}^{q}, \{\beta_j \}_{j=1}^J \subset R^+$. We say that $\sum \lambda_q \geq \sum \beta_j$ if $\sum \lambda_q - \sum \beta_j$ is a non-negative linear combination of positive roots.
\end{defn}

\begin{lemma} \label{lem:heckeprodfacts}
Let $M^v$ be a Maya diagram that corresponds to $v \in W^{P}$. Apply the $(a,b)$-rim hook rule to $M^v$ and call the resulting Maya diagram $M^{v'}$ where $v\in W^{P}$. Then we have the following: 

\begin{enumerate}
\item $s_{e_{i_{a-1}+1}-e_{i_b}}=s_{i_{a-1}+1}s_{i_{a-1}+2} \cdots s_{i_b-2}s_{i_b-1}s_{i_b-2} \cdots s_{i_{a-1}+2}s_{i_{a-1}+1}.$

\item $v'=v \cdot s_{e_{i_{a-1}+1}-e_{i_b}}.$

\item There exists a sequence of positive roots $\{ \beta_j \}_{j=1}^{J} \subset R^+$ such that 
\begin{enumerate}
\item $e_{i_{a-1}+1}-e_{i_b} \geq \sum \beta_j$;
\item any two elements of $\{ \beta_j \}_{j=1}^{J}$ overlap at most at an end point;
\item $v'=vs_{\beta_1}\ldots s_{\beta_J}$.
\end{enumerate}
\end{enumerate}
\end{lemma}

\begin{proof} We will prove each case individually.
\begin{enumerate} 

\item This is the result of a direct computation.

\item First, without loss of generality by re-indexing the word from Part (1), we only need to consider the case $s_{e_{1}-e_{i_n}}=s_1 s_2 \ldots s_{n-2}s_{n-1}s_{n-2}\ldots  s_2s_1$. If they exist, find the smallest indices $j_1, 1 \leq j_1 \leq  n-1$ and $j'_1, 1 \leq  j'_1 < n-1$ such that
\begin{enumerate}
\item $\ell(v \cdot s_1s_2\ldots s_{j_1})<\ell(v \cdot s_1s_2\ldots s_{j_1-1})$ and 
\item $\ell(v \cdot s_1s_2\ldots s_{n-2}s_{n-1}s_{n-2} \ldots s_{j'_1})<\ell(v \cdot s_1s_2\ldots s_{n-2}s_{n-1}s_{n-2} \ldots s_{j'_1-1})$.
\end{enumerate}
{\bf Case 1:} Suppose that $j_1$ exists but $j'_1$ does not. If $j_1=n-1$ then $v(n)=1$. But this implies $\ell(v \cdot s_1s_2\ldots s_{n-2}s_{n-1}s_{n-2})<\ell(v \cdot s_1s_2\ldots s_{n-2}s_{n-1})$ which implies $j'_1$ exists. So, $1 \leq j_1<n-1$. Then we have the following
\begin{eqnarray*}
v \cdot s_{e_{1}-e_{n}}&=&v \cdot s_1 s_2 \ldots s_{n-2}s_{n-1}s_{n-2}\ldots  s_2s_1 \\
&=& v \cdot s_1 s_2 \ldots s_{j_1-1} s_{j_1+1} \ldots s_{n-2}s_{n-1}s_{n-2}\ldots  s_2s_1\\
&=& \left(v \cdot \left( s_{j_1+1} \ldots s_{n-2}s_{n-1}s_{n-2}\ldots s_{j_1+1}\right)\right) \left(s_1 s_2 \ldots s_{j_1-1}s_{j_1}s_{j_1-1}  s_2s_1 \right)\\
&=& \left(v \cdot s_{e_{j_1+1}-e_n}\right) \left(s_{e_1-e_{j_1+1}} \right).
\end{eqnarray*}
{\bf Case 2:} Suppose that $j_1$ does not exist but $j'_1$ does. Then we have the following
\begin{eqnarray*}
v \cdot s_{e_{1}-e_{n}}&=&v \cdot s_1 s_2 \ldots s_{n-2}s_{n-1}s_{n-2}\ldots  s_2s_1 \\
&=& v \cdot s_1 s_2 \ldots s_{n-2}s_{n-1}s_{n-2}\ldots s_{j'_1+1}s_{j'_1-1} \ldots s_2s_1\\
&=& \left(v \left(s_1 s_2 \ldots s_{j'_1-1} s_{j'_1} s_{j'_1-1} \ldots s_2 s_1\right)\right) \cdot \left(s_{j_1+1} \ldots s_{n-2}s_{n-1}s_{n-2}\ldots s_{j_1+1}\right)\\
&=& \left( v\left(s_{e_1-e_{j'_1+1}} \right) \right) \cdot \left(s_{e_{j_1+1}-e_n}\right).
\end{eqnarray*}
{\bf Case 3:} Suppose that $j_1$ and $j'_1$ both exist, $j_1>j'_1$, and $j_1 \neq n-1$. Then we have the following

{\tiny
\begin{eqnarray*}
&&v \cdot s_{e_{1}-e_{n}}\\
&=&v \cdot s_1 s_2 \ldots s_{n-2}s_{n-1}s_{n-2}\ldots  s_2s_1 \\
&=& v \cdot s_1 s_2 \ldots s_{j_1-1}s_{j_1+1} \ldots s_{n-2}s_{n-1}s_{n-2}\ldots s_{j'_1+1}s_{j'_1-1} \ldots s_2s_1\\
&=&\left(v  s_1s_2 \ldots s_{j'_1-1}s_{j'_1}s_{j'_1-1} \ldots s_2s_1 \right) \cdot \left(s_{j_1'+1}s_{j_1'+2} \ldots s_{j_1-1}s_{j_1+1} \ldots s_{n-2}s_{n-1}s_{n-2} \ldots s_{j'_1+1} \right)\\
&=&\left(\left(v s_1s_2 \ldots s_{j'_1-1}s_{j'_1}s_{j'_1-1} \ldots s_2s_1\right) \cdot \left(s_{j_1+1} \ldots s_{n-2}s_{n-1}s_{n-2} \ldots s_{j_1+1}  \right) \right)\left(s_{j'_1+1}s_{j'_1+2} \ldots s_{j_1-1}s_{j_1}s_{j_1-1} \ldots s_{j'_1+2}s_{j'_1+1}\right)\\
&=&\left(\left(vs_{e_1-e_{j'_1}}\right) \cdot \left(s_{e_{j_1+1}-e_n}\right) \right) \cdot \left(s_{e_{j'_1+1}-e_{j_1+1}} \right).
\end{eqnarray*}
}
{\bf Case 4:} Suppose that $j_1$ and $j'_1$ both exist, $j_1>j'_1$, and $j_1=n-1$. Then we have the following
\begin{eqnarray*}
v \cdot s_{e_{1}-e_{n}}&=&\left(vs_{e_1-e_{j'_1}}\right)  \left(s_{e_{j'_1+1}-e_{n}} \right).
\end{eqnarray*}
{\bf Case 5:} Suppose that $j_1$ and $j'_1$ both exist, $j_1<j'_1$. Then we have the following
\begin{eqnarray*}
v \cdot s_{e_{1}-e_{n}}&=&\left(\left(v \cdot s_{e_{j_1+1}-e_{j'_1+1}} \right)\cdot \left(s_{e_{j'_1+1}-e_n}\right) \right)\left(s_{e_1-e_{j_1}}\right).
\end{eqnarray*}
{\bf Case 6:} Suppose that $j_1$ and $j'_1$ both exist and $j_1=j'_1$. Then we have the following
\begin{eqnarray*}
v \cdot s_{e_{1}-e_{n}}&=&vs_{e_{j_1+1}-e_{n}}.
\end{eqnarray*}
A key observation is that the (non-Hecke) permutation multiplication of $v$ by $s_{e_1-e_{j_1}}$ or $s_{e_1-e_{j'_1}}$ appears. Therefore, after iterating using the 6 cases above, the Hecke product $v \cdot s_{e_1-e_n}$ may be written as a (non-Hecke) product of $v$ times reflection of the form $s_{e_1-e_{j_1}}$ or $s_{e_1-e_{j'_1}}$ re-indexed as appropriate. When chained together, these reflections correspond to $M^v_{\uparrow j}$ and $M^v_{\downarrow j}$ from Definition \ref{defn:genrimhook}. Therefore, $v'=v \cdot s_{e_{i_{a-1}+1}-e_{i_b}}.$

\item This is an immediate consequence of applying iterations of the 6 cases in Part (2). The result follows.
\end{enumerate}
\end{proof}


\begin{example} \label{ex:chain}
    Consider the partial flag variety $\Fl(\{0<1<3<5<7<9<12\},12)$. Let $v=(2|3<8|10<12|9<11|1<5) \in W_P$. Here we will show the application of a $(2,6)-$rim hook to $v$. The outcome is $v \cdot s_{e_2-e_{12}}$ by Part (1) of Lemma \ref{lem:heckeprodfacts}. The following is an example of Part (3) of Lemma \ref{lem:heckeprodfacts} and the procedure to produce the sequence reflections. 
    \begin{enumerate}
    \item For the first step we have the following two facts.
    \begin{enumerate}
\item The number 12 is the biggest number in entries 2 through 12 (recall entries 9 through 12 are suppressed). 
\item The number 3 is the smallest number in an entry before 12 and in entries 2 through 12.
    \end{enumerate}
So we will use the reflection $s_{e_2-e_5}$ to interchange 3 and 12 to find
\[(2|{\color{green}3},8|10,{\color{green}12}|9,11|1,5)s_{e_2-e_5}=(2|{\color{red}12},{\color{red}8}|{\color{red}10},3|9,11|1,5).\] This is viewed visually in terms of Maya diagrams. Here we start by labeling the black $x$'s with their initial position reading bottom to top and left to right. In rows 2 through 6, notice that the $x$ (labeled $5$) farthest to the right is in the 12th column and the $x$ (labeled $2$) farthest to the left with a label less than 5 is in the third column. So, we interchange columns 3 and 12 using $s_{e_2-e_5}$.  The entries $x2,x3,x4$ are ineligible to be considered in future steps.
\[ \scalebox{.75}{\ytableausetup{centertableaux}
\begin{ytableau}
\bx&\bx&\bx              &x10&\bx&x11&x12&\bx&\bx&\bx&\bx&\bx\\
x8 &\bx&\bx              &   &x9 &   &   &\bx&\bx&\bx&\bx&\bx\\
   &\bx&\bx              &   &   &   &   &\bx&x6 &\bx&x7 &\bx\\
   &\bx&\bx              &   &   &   &   &\bx&   &x4 &   &{\color{green}x5}\\
   &\bx&{\color{green}x2}&   &   &   &   &x3 &   &   &   &\\
   &x1 &                 &   &   &   &   &   &   &   &   &
\end{ytableau}\ytableausetup{centertableaux}}  \overset{s_{e_2-e_5}}{\longrightarrow}
  \scalebox{.75}{\ytableausetup{centertableaux}
\begin{ytableau}
\bx&\bx&\bx&x10&\bx&x11&x12&\bx            &\bx&\bx            &\bx&\bx\\
x8 &\bx&\bx&   &x9 &   &   &\bx            &\bx&\bx            &\bx&\bx\\
   &\bx&\bx&   &   &   &   &\bx            &x6 &\bx            &x7 &\bx\\
   &\bx&x5 &   &   &   &   &\bx            &   &{\color{red}x4}&   &\bx\\
   &\bx&   &   &   &   &   &{\color{red}x3}&   &               &   &{\color{red}x2}\\
   &x1 &   &   &   &   &   &               &   &               &   &
\end{ytableau}\ytableausetup{centertableaux}}.\]

\item For the second step we have the following two facts.
\begin{enumerate}
\item The number 11 is the biggest number in entries 5 through 12. 
\item The number 3 is the smallest number in an entry before 12 and in entries 5 through 12.
    \end{enumerate}
So we will use the reflection $s_{e_5-e_7}$ to interchange 3 and 11 to find
\[(2|{\color{red}12},{\color{red}8}|{\color{red}10},{\color{green}3}|9,{\color{green} 11}|1,5)s_{e_5-e_7}=(2|{\color{red}12},{\color{red}8}|{\color{red}10},{\color{red}11}|{\color{red} 9},3|1,5).\] This is viewed visually in terms of Maya diagrams. In rows 2 through 6, notice that the $x$ (labeled $7$) farthest to the right is in the 11th column and the $x$ (labeled $5$) farthest to the left  with a label less than 7 is in the 3rd column. So, we interchange columns 3 and 11 using $s_{e_5-e_7}$. The entries $x2,x3,x4,x5,x6$ are ineligible to be considered in future steps.

\[ \scalebox{.75}{\ytableausetup{centertableaux}
\begin{ytableau}
\bx&\bx                  &\bx              &x10&\bx&x11&x12&\bx            &\bx            &\bx            &\bx              &\bx\\
x8 &\bx                  &\bx              &   &x9 &   &   &\bx            &\bx            &\bx            &\bx              &\bx\\
   &\bx                  &\bx              &   &   &   &   &\bx            &x6             &\bx            &{\color{green}x7}&\bx\\
   &\bx                  &{\color{green}x5}&   &   &   &   &\bx            &               &{\color{red}x4}&                 &\bx\\
   &\bx                  &                 &   &   &   &   &{\color{red}x3}&               &               &                 &{\color{red}x2}\\
   &x1                   &                 &   &   &   &   &               &               &               &                 &
\end{ytableau}\ytableausetup{centertableaux}}  \overset{s_{e_5-e_7}}{\longrightarrow}
  \scalebox{.75}{\ytableausetup{centertableaux}
\begin{ytableau}
\bx&\bx&\bx&x10&\bx&x11&x12&\bx            &\bx&\bx            &\bx&\bx\\
x8 &\bx&\bx&   &x9 &   &   &\bx            &\bx&\bx            &\bx&\bx\\
   &\bx&x7 &   &   &   &   &\bx            &{\color{red}x6} &\bx            &\bx  &\bx\\
   &\bx&   &   &   &   &   &\bx            &   &{\color{red}x4}&{\color{red}x5} &\bx\\
   &\bx&   &   &   &   &   &{\color{red}x3}&   &               &   &{\color{red}x2}\\
   &x1 &   &   &   &   &   &               &   &               &   &
\end{ytableau}\ytableausetup{centertableaux}}.\]

\item For the third step we have the following two facts.
\begin{enumerate}
\item The number 7 is the biggest number in entries 7 through 12. 
\item The number 1 is the smallest number in an entry before 7 and in entries 7 through 12.
    \end{enumerate}
So we will use the reflection $s_{e_8-e_{12}}$ to interchange 1 and 7 to find
\[(2|{\color{red}12},{\color{red}8}|{\color{red}10},{\color{red}11}|{\color{red} 9},3|{\color{green}1},5)s_{e_8-e_{12}}=(2|{\color{red}12},{\color{red}8}|{\color{red}10},{\color{red}11}|{\color{red} 9},3|7,{\color{red}5}).\] This is viewed visually in terms of Maya diagrams. In rows 2 through 6, notice that the $x$ (labeled $12$) farthest to the right is in the 7th column and the $x$ (labeled $8$) farthest to the left  with a label less than 12 is in the 1st column. So, we interchange columns 1 and 7 using $s_{e_8-e_{12}}$. The entries $x2,x3,x4,x5,x6,x9,x10,x11,x12$ are ineligible to be considered in future steps.
\[ \scalebox{.75}{\ytableausetup{centertableaux}
\begin{ytableau}
\bx&\bx&\bx&x10&\bx&x11&{\color{green}x12}&\bx            &\bx&\bx            &\bx&\bx\\
{\color{green}x8} &\bx&\bx&   &x9 &   &   &\bx            &\bx&\bx            &\bx&\bx\\
   &\bx&x7 &   &   &   &   &\bx            &{\color{red}x6} &\bx            &x  &\bx\\
   &\bx&   &   &   &   &   &\bx            &   &{\color{red}x4}&{\color{red}x5} &\bx\\
   &\bx&   &   &   &   &   &{\color{red}x3}&   &               &   &{\color{red}x2}\\
   &x1 &   &   &   &   &   &               &   &               &   &
\end{ytableau}\ytableausetup{centertableaux}}  \overset{s_{e_8-e_{12}}}{\longrightarrow}
  \scalebox{.75}{\ytableausetup{centertableaux}
\begin{ytableau}
{\color{red}x12}&\bx&\bx&{\color{red}x10}&\bx    &{\color{red}x11}&\bx            &\bx&\bx            &\bx&\bx&\bx\\
 &\bx&\bx&   &{\color{red}x9}     &   & x8              &\bx&\bx            &\bx&\bx            &\bx\\
   &\bx&x7&   &               &   &               &\bx&{\color{red}x6} &\bx            &x  &\bx\\
   &\bx&   &   &   &   &   &\bx            &   &{\color{red}x4}&{\color{red}x5} &\bx\\
   &\bx&   &   &   &   &   &{\color{red}x3}&   &               &   &{\color{red}x2}\\
   &x1 &   &   &   &   &   &               &   &               &   &
\end{ytableau}\ytableausetup{centertableaux}}.\]

\item For the fourth step we have the following two facts.
\begin{enumerate}
\item The number 7 is the biggest number in entries 7 through 8. 
\item The number 3 is the smallest number in an entry before 7 and in entries 7 through 8.
    \end{enumerate}
So we will use the reflection $s_{e_7-e_{8}}$ to interchange 3 and 7 to find
\[(2|{\color{red}12},{\color{red}8}|{\color{red}10},{\color{red}11}|{\color{red} 9},{\color{green}3}|{\color{green}7},{\color{red}5})s_{e_7-e_{8}}=(2|{\color{red}12},{\color{red}8}|{\color{red}10},{\color{red}11}|{\color{red} 9},{\color{red}7}|{\color{red}3},{\color{red}5}).\] This is viewed visually in terms of Maya diagrams. In rows 2 through 6, notice that the $x$ (labeled $8$) farthest to the right is in the 7th column and the $x$ (labeled $7$) farthest to the left  with a label less than 8 is in the 3rd column. So, we interchange columns 3 and 7 using $s_{e_7-e_{8}}$. There are no more eligible $x$ to consider in rows 2 through 6.
\[ \scalebox{.75}{\ytableausetup{centertableaux}
\begin{ytableau}
{\color{red}x12}&\bx&\bx&{\color{red}x10}&\bx    &{\color{red}x11}&\bx            &\bx&\bx            &\bx&\bx&\bx\\
 &\bx&\bx&   &{\color{red}x9}     &   & {\color{green}x8}              &\bx&\bx            &\bx&\bx            &\bx\\
   &\bx&{\color{green}x7}&   &               &   &               &\bx&{\color{red}x6} &\bx            &x  &\bx\\
   &\bx&   &   &   &   &   &\bx            &   &{\color{red}x4}&{\color{red}x5} &\bx\\
   &\bx&   &   &   &   &   &{\color{red}x3}&   &               &   &{\color{red}x2}\\
   &x1 &   &   &   &   &   &               &   &               &   &
\end{ytableau}\ytableausetup{centertableaux}}  \overset{s_{e_7-e_8}}{\longrightarrow}
  \scalebox{.75}{\ytableausetup{centertableaux}
\begin{ytableau}
{\color{red}x12}&\bx&\bx&{\color{red}x10}&\bx    &{\color{red}x11}&\bx            &\bx&\bx            &\bx&\bx&\bx\\
 &\bx&{\color{red}x8}&   &{\color{red}x9}     &   & \bx              &\bx&\bx            &\bx&\bx            &\bx\\
   &\bx&&   &               &   & {\color{red}x7}            &\bx&{\color{red}x6} &\bx            &x  &\bx\\
   &\bx&   &   &   &   &   &\bx            &   &{\color{red}x4}&{\color{red}x5} &\bx\\
   &\bx&   &   &   &   &   &{\color{red}x3}&   &               &   &{\color{red}x2}\\
   &x1 &   &   &   &   &   &               &   &               &   &
\end{ytableau}\ytableausetup{centertableaux}}.\]

\end{enumerate}

Finally observe that $v\cdot s_{e_2-e_{12}}=vs_{e_2-e_5}s_{e_5-e_7}s_{e_8-e_{12}}s_{e_7-e_8}$ and $e_2-e_{12}=(e_{2}-e_5)+(e_{5}-e_7)+(e_8-e_{12})+(e_7-e_8)$.
\end{example}

\begin{lemma} \label{lem:Mayasimroot}
Let $M^v$ be a Maya diagram that corresponds to $v \in W^{P}$. Apply the $(a,b)$-rim hook rule to $M^v$ and call the resulting Maya diagram $M^{v'}$ where $v\in W^{P}$. Then there is a chain $\mathcal{C}$ originating at $v$ to terminating at $v'$ such that \[\deg_{\mathcal{C}}(v,v') \leq (\overbracket{0,\cdots,0}^{a-1},\overbracket{1,\cdots,1}^{b-a},\overbracket{0,\cdots,0}^{k+1-b}).\]
\end{lemma}

\begin{proof}
This follows immediately from Part (3) of Lemma \ref{lem:heckeprodfacts}.
\end{proof}

The next two definitions give conditions for when to apply generalized $(a,b)$-rim hooks to produce a chain of minimum degree.
\begin{defn}
Let $v,w \in W^{P}$ with corresponding Maya diagrams $M^v$ and $M^w$, respectively. We say that position $(y,z)$ in $M^v$ is Bruhat order incompatible with position $(y,z)$ in $M^w$ if $S_y(M^v,z) > S_y(M^w,z)$. Similarly, we say that the row $m_y^v$ in $M^v$ is Bruhat order incompatible with the row $m_y^w$ in $M^w$ if $S_y(M^v,z) > S_y(M^w,z)$ for some $1 \leq z \leq n$. If $S_y(M^v,z) \leq S_y(M^w,z)$ for all $1 \leq z \leq n$, then we say $m_y^w \leq m_y^v$.
\end{defn}

It is important to apply the $(a,b)$-rim hook to as many rows as possible that are Bruhat incompatible while not applying it to rows that are Bruhat compatible. This is to reduce the number of $(a,b)$-rim hooks needed, so a minimum quantum degree is calculated.

\begin{defn}
Let $M^v$ and $M^w$ be Maya diagrams with $v,w \in W^{P}$, respectively, with the following properties:
\begin{enumerate} 
\item Row $m_j^v$ in $M^v$ is Bruhat order incompatible with row $m_j^w$ in $M^w$ for $a \leq j \leq b-1$;
\item If $a>1$, then $m_{a-1}^w \leq m_{a-1}^v$.
\item $m_b^w \leq m_b^v$.
\end{enumerate}
When the above conditions apply, let $ \displaystyle  \mathcal{C}^{(a,b)}_{(v,w)}$ denote a chain of degree less than or equal to $(\overbracket{0,\cdots,0}^{a-1},\overbracket{1,\cdots,1}^{b-a},\overbracket{0,\cdots,0}^{k+1-b})$ that originates at $v$ and terminating at $v'$ where $M^{v'}$ is the result of applying an $(a,b)$-rim hook rule to $M^v$.
\end{defn}

\begin{example}
Consider the following two Maya diagrams.

\begin{eqnarray*}
M^w=\scalebox{.75}{\ytableausetup{centertableaux}
\begin{ytableau}
\bx &  \bx &\bx  & x   &  \bx  &\bx&\bx & x &\bx  &\bx  &\bx &x& x \\
\bx &  x &x  &    &  \bx  &\bx&\bx &  &\bx  &\bx  &\bx &&  \\
\bx &    &   &    & \bx   &x  &x   &  &\bx  &\bx  &\bx &&\\
\bx &    &   &    & x   &   &    &  &\bx  &x    &x   &&\\
\bx &    &   &    &      &   &    &  &x    &x     &    &&\\
x   &    &   &    &       &   &    &  &     &     &    &&
\end{ytableau} \ytableausetup{centertableaux}} \mbox{ and } M^v=\scalebox{.75}{\ytableausetup{centertableaux}
\begin{ytableau}
\bx&\bx&\bx&x    & \bx     & x  & x &\bx &\bx  &\bx & \bx&x&\bx  \\
x&\bx&\bx&    & x     &   &  &\bx &\bx  &\bx & \bx&&\bx  \\
 &\bx&\bx!&    &       &   &  &\bx & x   & \bx& x  &&\bx \\
 &\bx&\bx!&    &       &   &  &\bx! &     & x  &    &&x\\
 &\bx&x!  &    &       &   &  &x!   &     &    &    && \\
 &x  &   &    &       &   &  &  &     &    &    &&
\end{ytableau}\ytableausetup{centertableaux}}.
\end{eqnarray*} 
Here row $m_j^v$ in $M^v$ is Bruhat order incompatible with row $m_j^w$ in $M^w$ for $2 \leq j \leq 4.$ Also, $m_1^w \leq m_1^v$ and $m_5^w \leq m_5^v$. We use $!$ to mark the Bruhat order incompatible positions in $M^v$.
\end{example}

\begin{defn}

Let $M^v$ and $M^w$ be a Maya diagram corresponding to $v,w \in W^{P}.$. Let $v_0:=v$. Let $\mathcal{C}$ be a chain in $W^{P}$ in terms of Maya diagrams given by
\begin{eqnarray*}
\mathcal{C}:M^{v_0} \overset{\mathcal{C}^{(a_0,b_0)}_{(v_0,w)}}{\longrightarrow} M^{v_1} \overset{\mathcal{C}^{(a_1,b_1)}_{(v_1,w)}}{\longrightarrow} M^{v_2} \overset{\mathcal{C}^{(a_2,b_2)}_{(v_{2},w)}}{\longrightarrow}\cdots \overset{\mathcal{C}^{(a_{r-1},b_{r-1})}_{(v_{r-1},w)}}{\longrightarrow} M^{v_r}.
\end{eqnarray*} 
Define $\displaystyle  \mbox{Comp}_y\mathcal{C}$ to be the $y$th component of $\displaystyle \sum_{j=0}^{r-1} \deg \mathcal{C}^{(a_j,b_j)}_{(v_j,w)}$.

\end{defn}






The next definition is necessary to state a Lemma \ref{lem:lbmdeg} which states a lower bound for the minimal degrees of chains connecting $v$ to $w$. 

\begin{defn} \label{def:grassmindeg}
Let $M^v$ and $M^w$ be a Maya diagram corresponding to $v,w \in W^{P}.$. Let $1 \leq y \leq k$ and let $\pi_y:W^{P} \rightarrow W^{P_{i_y}}$ be the natural projection where $P_{i_y}$ is the maximal parabolic subgroup associated with $i_y$. Let $v_0:=v$. {\it Define $\deg_y(v,w)$} to be the {\it smallest} integer such that there is a chain in terms of Maya diagrams given by \begin{eqnarray*}
\mathcal{C}(y):M^{\pi_y(v_0)} \overset{\mathcal{C}^{(1,2)}_{(\pi_y(v_0),\pi_y(w))}}{\longrightarrow} M^{\pi_y(v_1)} \overset{\mathcal{C}^{(1,2)}_{(\pi_y(v_1),\pi_y(w))}}{\longrightarrow} \cdots \overset{\mathcal{C}^{(1,2)}_{(\pi_y(v_{{\deg_y(v,w)}-1}),\pi_y(w))}}{\longrightarrow} M^{\pi_y(v_{\deg_y(v,w)})} 
\end{eqnarray*}
has the property $M^{\pi_y(v_j)} \ngeq M^{\pi_y(w)}$ for $1 \leq j \leq \deg_y(v,w)-1$ and $M^{\pi_y(v_{\deg_y(v,w)})} \geq M^{\pi_y(w)}$.  
\end{defn}

In Definition \ref{def:grassmindeg}, this is a chain of two-row May diagrams where the bottom row corresponds to the $y$-th row of the original Maya diagram. This corresponds to a minimum quantum degree calculation in the Grassmannian case $\Fl(\{0<i_y<n \};n).$.

\begin{example}
See Example \ref{ex:grass} for an example of a chain in Definition \ref{def:grassmindeg}.
\end{example}

\begin{lemma} \label{lem:lbmdeg}
Let $M^v$ and $M^w$ be Maya diagrams corresponding to $v,w \in W^{P}.$. Let $\mathcal{C}$ be any chain from $v$ to $w$. Then
\[ (\deg_1(v,w), \cdots, \deg_k(v,w)) \leq \deg_{\mathcal{C}}(v,w). \]
\end{lemma}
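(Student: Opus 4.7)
My plan is to reduce the partial-flag inequality to a Grassmannian statement, one index $j \in \{1,\ldots,k\}$ at a time, via the natural projection $\pi_j\colon W^{P_I} \to W^{P_{i_j}}$. Fix $j$ and let $\mathcal{C}\colon u_0, u_1, \ldots, u_r$ be an arbitrary chain from $v$ to $w$ (so $u_0 \leq v$ and $u_r \geq w$). The first step is to inspect a single adjacency $u_{i-1} \to u_i = u_{i-1} s_{e_a-e_b}$, in which position $a$ lies in block $q$ and position $b$ in block $t$ with $q < t$. A direct check using Definition \ref{defn:MayaDiagrams} shows that the underlying swap changes the row-$j$ set $\{u_{i-1}(1),\ldots,u_{i-1}(i_j)\}$ if and only if exactly one of $a,b$ is $\leq i_j$; this is equivalent to $q \leq j < t$ and to $(d(u_{i-1},u_i))_j = 1$. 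When this condition holds, $\pi_j(u_{i-1})$ and $\pi_j(u_i)$ are adjacent in the Grassmannian $W^{P_{i_j}}$ (they differ by swapping a single value in or out of the row-$j$ set); when it fails, the projections agree.

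Collapsing consecutive repeats in the projected sequence $\pi_j(u_0), \ldots, \pi_j(u_r)$ then produces a chain $\mathcal{C}_j$ in $W^{P_{i_j}}$ whose length is exactly the $j$-th coordinate $(\deg_{\mathcal{C}}(v,w))_j$. Because Bruhat order descends through $\pi_j$, we have $\pi_j(u_0) \leq \pi_j(v)$ and $\pi_j(u_r) \geq \pi_j(w)$, so $\mathcal{C}_j$ is a valid chain from $\pi_j(v)$ to $\pi_j(w)$ in the Grassmannian $W^{P_{i_j}}$. Iterating over $j$ reduces the lemma to the following Grassmannian lower bound: any chain from $\pi_j(v)$ to $\pi_j(w)$ has length at least $\deg_j(v,w)$.

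For this remaining Grassmannian inequality I would appeal to the two-row Maya-diagram criterion of Proposition \ref{prop:bruhatorder}. A single adjacency in $W^{P_{i_j}}$ swaps one $x$ of the bottom row $m_1$ with a non-$x$ in that row and hence alters the statistic $S_1(M^{\cdot},\cdot)$ by $\pm 1$ on a contiguous block of columns. By tracking the deficit between $S_1(M^{u'},\cdot)$ and $S_1(M^{\pi_j(w)},\cdot)$ along the chain, one shows inductively that each adjacency resolves at most one of the maximal Bruhat-order incompatible ``blocks'' between $M^{\pi_j(v)}$ and $M^{\pi_j(w)}$, and those blocks are counted by the greedy $R$-moves in Definition \ref{def:grassmindeg}. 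The main obstacle is making this counting rigorous for non-monotone chains (those that first move Bruhat-down from $\pi_j(v)$ before going back up): I anticipate needing a short exchange argument to replace any such chain by a Bruhat-monotone chain of no greater length, after which the length matches $\deg_j(v,w)$ by construction.
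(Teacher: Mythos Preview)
Your projection argument is the natural unpacking of the paper's one-line proof (``This is clear by Definition~\ref{def:grassmindeg}''): the author is implicitly relying on (i) the fact that a chain $\mathcal{C}$ in $W^{P_I}$ projects under $\pi_j$ to a Grassmannian chain whose degree is the $j$-th coordinate of $\deg_{\mathcal C}(v,w)$, and (ii) the fact that $\deg_j(v,w)$, defined via the greedy rim-hook process, is the minimum degree of any such Grassmannian chain. Your treatment of step~(i) is correct and matches what the paper takes for granted.

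For step~(ii), however, your proposed invariant is wrong, and this is why you run into an apparent obstacle. The greedy rim-hook count $\deg_j(v,w)$ is \emph{not} the number of maximal Bruhat-incompatible column intervals; it equals the maximum columnwise deficit
\[
\deg_j(v,w)\;=\;\max_{1\le b\le n}\bigl(S_1(M^{\pi_j(v)},b)-S_1(M^{\pi_j(w)},b)\bigr)^{+}.
\]
For instance, with $v=\{1,2\}$ and $w=\{3,4\}$ in $\Gr(2,4)$ there is a single incompatible block but $\deg_j=2$. That the greedy process realises this maximum follows because every column $b$ at which the deficit is positive lies between the leftmost $x$ and the rightmost gap of the current bottom row, and that is exactly the interval on which one rim hook decreases $S_1(\cdot,b)$ by one; hence each rim hook lowers the maximum deficit by exactly one. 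With this corrected invariant the lower bound is immediate and needs no monotonicity or exchange argument: a single Grassmannian adjacency changes $S_1(\cdot,b)$ by at most one at each fixed $b$, so any chain from some $u_0\le\pi_j(v)$ to some $u_r\ge\pi_j(w)$ has length at least $S_1(M^{\pi_j(v)},b)-S_1(M^{\pi_j(w)},b)$ for every $b$, hence at least $\deg_j(v,w)$. The difficulty you anticipated with non-monotone chains therefore does not arise.
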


\begin{proof}
If not, then for some $y$, $\deg_y(v,w)$ is not the smallest integer such that there is a chain in terms of Maya diagrams given by \begin{eqnarray*}
\mathcal{C}(y):M^{\pi_y(v_0)} \overset{\mathcal{C}^{(1,2)}_{(\pi_y(v_0),\pi_y(w))}}{\longrightarrow} M^{\pi_y(v_1)} \overset{\mathcal{C}^{(1,2)}_{(\pi_y(v_1),\pi_y(w))}}{\longrightarrow} \cdots \overset{\mathcal{C}^{(1,2)}_{(\pi_y(v_{{\deg_y(v,w)}-1}),\pi_y(w))}}{\longrightarrow} M^{\pi_y(v_{\deg_y(v,w)})} 
\end{eqnarray*}
has the property $M^{\pi_y(v_j)} \ngeq M^{\pi_y(w)}$ for $1 \leq j \leq \deg_y(v,w)-1$ and $M^{\pi_y(v_{\deg_y(v,w)})} \geq M^{\pi_y(w)}$.
\end{proof}

\section{Maya diagrams and the Bruhat order}
 It is not clear that $\deg_y(v,w)$ and the $y$th component of $\sum_{j=0}^{r-1} \deg \mathcal{C}^{(a_j,b_j)}_{(v_j,w)}$ are equal. This is because generalized $(a,b)$-rim hooks do not necessarily remove the first $x$ in a row and place an $x$ in the last open position in a particular row. We use Lemmas \ref{lem:noprob}, \ref{lem:noprob1}, and \ref{lem:noprob2} to address this question. The next lemma addresses the Bruhat compatibility of adding an $x$ to a row when applying a generalized $(a,b)$-rim hook rule.

\begin{lemma} \label{lem:noprob}
Let $M^v$ and $M^w$ be a Maya diagram corresponding to $v,w \in W^{P}.$. Consider
\[M^{v} \overset{\mathcal{C}^{(a,b)}_{(v,w)}}{\longrightarrow} M^{v'}. \]
Consider the $y$th rows where $a \leq y \leq b-1$. Suppose that the new $x$ in $m_y^{v'}$ that is not in $m_y^{v}$ is in position $(y,z_0)$. It follows that $S_{y}(M^{v'},z) \leq S_{y}(M^{w},z)$ for all $z \geq z_0$.
\end{lemma}

\begin{proof}
For a contradiction, assume that $S_y(M^{v'},z)>S_y(M^{w},z)$ for some $z \geq z_0$. Then, by the generalized $(a,b)$-rim hook rule, we must have \[S_{y+1}(M^{v'},z)=S_y(M^{v'},z)+(i_{y+1}-i_y).\] Also, \[ S_y(M^{w},z)+(i_{y+1}-i_y) \geq S_{y+1}(M^{w},z).\] It follows that $S_{y+1}(M^{v'},z)>S_{y+1}(M^{w},z).$. Therefore, $S_{b}(M^{v'},z)>S_{b}(M^{w},z)$ by repeating the previous argument. This is a contradiction since $m_b^{v'} \geq m_b^{w}$. It follows that $S_{y}(M^{v'},z) \leq S_{y}(M^{w},z)$ for $z \geq z_0$.
\end{proof}

The next lemma addresses the Bruhat compatibility of not necessarily removing the first $x$ in a row when applying a generalized $(a,b)$-rim hook rule.

\begin{lemma} \label{lem:noprob1}
Let $M^v$ and $M^w$ be a Maya diagram corresponding to $v,w \in W^{P}.$. Consider
\[M^{v} \overset{\mathcal{C}^{(a,b)}_{(v,w)}}{\longrightarrow} M^{v'}. \] 
Consider the $y$th rows where $a \leq y \leq b-1$. Suppose that the $x$ in $m_y^{v}$ that is not in $m_y^{v'}$ is in position $(y,z_0)$. It follows that $S_y(M^{v'},z)\leq S_y(M^{w},z)$ for all $z < z_0$.
\end{lemma}

\begin{proof}
 If $a=1$  then $f(M^v,y,z)=0$ for all $z<z_0$. So, $S_1(M^{v'},z)\leq S_1(M^{w},z)$ for all $z < z_0$.

Suppose $a>1$ and $z<z_0$. By the definition of $\mathcal{C}^{(a,b)}_{(v,w)}$, we know that $S_{a-1}(M^{v},z)\leq S_{a-1}(M^{w},z)$. It must be true that $S_y(M^{v'},z) = S_{a-1}(M^v,z)$ since no $x$ is added or removed from the $y$-th row that is before the position $(y,z_0)$. Also, observe that $S_y(M^w,z)$ increases as $y$ increases. Therefore, \[S_y(M^{v'},z) = S_{a-1}(M^v,z) \leq S_{a-1}(M^{w},z) \leq S_y(M^{w},z). \] The result follows.
\end{proof}

\begin{example} 
Consider the following two Maya diagrams.
\[ M^w=\scalebox{.75}{\ytableausetup{centertableaux}
\begin{ytableau}
\bx &  \bx &\bx  & x   &  \bx  &\bx&\bx & x &\bx  &\bx  &\bx &x& x \\
\bx &  x &x  &    &  \bx  &\bx&\bx &  &\bx  &\bx  &\bx &&  \\
\bx &    &   &    & \bx   &x  &x   &  &\bx  &\bx  &\bx &&\\
\bx &    &   &    & x   &   &    &  &\bx  &x    &x   &&\\
\bx &    &   &    &      &   &    &  &x    &x     &    &&\\
x   &    &   &    &       &   &    &  &     &     &    &&
\end{ytableau}\ytableausetup{centertableaux}}  \mbox{ and } 
  M^v=\scalebox{.75}{\ytableausetup{centertableaux}
\begin{ytableau}
\bx&\bx&\bx&x    & \bx     & x  & x &\bx &\bx  &\bx & \bx&x&\bx  \\
x&\bx&\bx&    & x     &   &  &\bx &\bx  &\bx & \bx&&\bx  \\
 &\bx&\bx&    &       &   &  &\bx & x   & \bx& x  &&\bx \\
 &\bx&\bx&    &       &   &  &\bx &     & x  &    &&x\\
 &\bx&x  &    &       &   &  &x   &     &    &    && \\
 &x  &   &    &       &   &  &  &     &    &    &&
\end{ytableau}\ytableausetup{centertableaux}}.\]

Then we have the following where $!$ indicates the Bruhat order incompatible positions and $\uparrow$ and $\downarrow$ describe the movement of the $x$'s for when a (2,5)-rim hook is applied. The entries in {\color{brown} brown} are Bruhat compatible with $M^w$ by Lemma \ref{lem:noprob} and the entries in {\color{purple} purple} are Bruhat compatible with $M^w$ by Lemma \ref{lem:noprob1}.
\begin{eqnarray*}
M^v=\scalebox{.75}{\ytableausetup{centertableaux}
\begin{ytableau}
\bx&\bx&\bx & x   & \bx     & x  & x &\bx &\bx  &\bx & \bx&x&\bx  \\
x&\bx&\bx &    & x     &   &  &\bx &\bx  &\bx & \bx&&\bx  \\
 &\bx&\bx!&    &       &   &  &\bx & x   & \bx& x  &&\bx \\
 &\bx&\bx!&    &       &   &  &\bx!&     & x  &    &&x\\
 &\bx& x! &    &       &   &  &x!  &     &    &    && \\
 &x  &    &    &       &   &  &    &     &    &    &&
\end{ytableau}\ytableausetup{centertableaux}} &\overset{(2,5)\mbox{-rim hook}}{\longrightarrow}&
\scalebox{.75}{\ytableausetup{centertableaux}
\begin{ytableau}
\bx&\bx& \bx       & x   & \bx & x  & x &\bx &\bx  &\bx & \bx         &x&\bx  \\
x&\bx& x       &    & \downarrow &   &  &\bx &\bx  &\bx & \bx         &&\bx  \\
 &{\color{brown} x}&\uparrow &    & {\color{purple} x}           &   &  & {\color{purple} x} & {\color{purple} x}   & {\color{purple} x}& {\color{purple} \downarrow} &&{\color{purple} x} \\
 &{\color{brown} x}&\uparrow &    &             &   &  &{\color{blue} x}&     & { x}  & {\color{purple} x}           && {\color{purple} \downarrow} \\
 &{\color{brown} x}&\uparrow&    &             &   &  & x  &     &    &             &&{\color{purple} x} \\
 &x  &         &    &             &   &  &    &     &    &             &&
\end{ytableau}\ytableausetup{centertableaux}}.\\
\end{eqnarray*}
\end{example}

We use Lemmas \ref{lem:noprob} and \ref{lem:noprob1} to produce the inequality stated in the next lemma. 

\begin{lemma}\label{lem:noprob2}
Let $M^v$ and $M^w$ be a Maya diagram corresponding to $v,w \in W^{P}$ and assume that $\deg_y(v,w)>0$. Let $v_0:=v$. Let $\mathcal{C}$ be a chain in $W^{P}$ in terms of Maya diagrams given by
\begin{eqnarray*}
\mathcal{C}:M^{v_0} \overset{\mathcal{C}^{(a_0,b_0)}_{(v_0,w)}}{\longrightarrow} M^{v_1} \overset{\mathcal{C}^{(a_1,b_1)}_{(v_1,w)}}{\longrightarrow} M^{v_2} \overset{\mathcal{C}^{(a_2,b_2)}_{(v_{2},w)}}{\longrightarrow}\cdots \overset{\mathcal{C}^{(a_{r-1},b_{r-1})}_{(v_{r-1},w)}}{\longrightarrow} M^{v_r} 
\end{eqnarray*}
where $M^{v_j} \ngeq M^w$ for $1 \leq j \leq r-1$ and $M^{v_r} \geq M^w$. It follows that $\mbox{Comp}_y\mathcal{C} \leq \deg_y(v,w).$
\end{lemma}

The strategy of the proof of Lemma \ref{lem:noprob2} is to show that if the number of $(a,b)$-rim hooks applied to the $y$th row of a Maya diagram is equal to $\deg_y(v,w)$ (as in Definition \ref{def:grassmindeg}), then the corresponding rows are Bruhat compatible. Furthermore, since the rows are Bruhat compatible, there is no need to apply another $(a,b)$-rim hook to that row, which gives $\deg_y(v,w)$ as an upper bound to the number of $(a,b)$-rim hooks needed to apply to the $y$th row.

\begin{proof}
Let $M^v$ and $M^w$ be a Maya diagram corresponding to $v,w \in W^{P}$ and assume that $\deg_y(v,w)>0$. Let $v_0:=v$. Let $1 \leq D \leq r$ be such that $\mathcal{C*}$ is a chain in $W^{P}$ in terms of Maya diagrams given by
\begin{eqnarray*}
\mathcal{C*}:M^{v_0} \overset{\mathcal{C}^{(a_0,b_0)}_{(v_0,w)}}{\longrightarrow} M^{v_1} \overset{\mathcal{C}^{(a_1,b_1)}_{(v_1,w)}}{\longrightarrow} M^{v_2} \overset{\mathcal{C}^{(a_2,b_2)}_{(v_{2},w)}}{\longrightarrow}\cdots \overset{\mathcal{C}^{(a_{D-1},b_{D-1})}_{(v_{D-1},w)}}{\longrightarrow} M^{v_D} 
\end{eqnarray*}
where $\mbox{Comp}_y\mathcal{C*} \leq \deg_y(v,w)$. If $m_y^{v_j} \geq m_y^w$ for some $1 \leq j \leq D-1$, then we are done since another $(a,b)$-rim hook will not be applied to the $y$th row. Suppose $m_y^{v_j} \ngeq m_y^w$ for $1 \leq j \leq D-1$ and $\mbox{Comp}_y\mathcal{C*} = \deg_y(v,w)$. Our aim is to show that $m_y^{v_D} \geq m_y^w$.

Suppose that the $x$ in $m_y^{v_{D-1}}$ that is not in $m_y^{v_D}$ is in position $(y,z_0)$ and that the new $x$ in $m_y^{v_D}$ that is not in $m_y^{v_{D-1}}$ is in position $(y,z_1)$. By Lemmas \ref{lem:noprob} and \ref{lem:noprob1}, if $z \leq z_0$ or $z_1 \leq z$ then $S_y(M^{v_D},z) \leq S_y(M^w,z)$. 

Let $z_0<z'<z_1.$. Let $\mathcal{C}(y)$ be a chain in terms of Maya diagrams given by \begin{eqnarray*}
\mathcal{C}(y):M^{\pi_y(v_0)} \overset{\mathcal{C}^{(1,2)}_{(\pi_y(v_0),\pi_y(w))}}{\longrightarrow} M^{\pi_y(v_1)} \overset{\mathcal{C}^{(1,2)}_{(\pi_y(v_1),\pi_y(w))}}{\longrightarrow} \cdots \overset{\mathcal{C}^{(1,2)}_{(\pi_y(v_{{\deg_y(v,w)}-1}),\pi_y(w))}}{\longrightarrow} M^{\pi_y(v_{\deg_y(v,w)})}. 
\end{eqnarray*}
where $M^{\pi_y(v_j)} \ngeq M^{\pi_y(w)}$ for $1 \leq j \leq \deg_j(v,w)-1$ and $M^{\pi_y(v_{\deg_y(v,w)})} \geq M^{\pi_y(w)}$. In the chain $\mathcal{C*}$ the number of $x$'s in row $y$ removed before $z_0$ is $\deg_y(v,w)$ and they are replaced behind $z_1$. Similarly, in the chain $\mathcal{C}(y)$ the number of $x$'s in row $1$ removed before $z_0$ is $\deg_y(v,w)$ and they are replaced behind $z_1$. This implies $S_y(M^{v_D},z')=S_1(M^{\pi_y(v_{\deg_y(v,w)})},z')$. Then \[S_y(M^{v_D},z')=S_1(M^{\pi_y(v_{\deg_y(v,w)})},z') \leq S_1(M^{\pi_y(w)},z')=S_y(M^w,z').\] Therefore, $m_y^{v_D} \geq m_y^w$.

The result follows since $y \notin \{a_i, a_{i}+1, \cdots, b_i-1\}$ for $D \leq i \leq r-1$ (that is, we do not apply another $(a,b)$-rim hook to the $y$th row).
\end{proof}

\section{Main Result}
We arrive at our main theorem by observing that $\deg_y(v,w)$ is bounded above and below by $\mbox{Comp}_y\mathcal{C}$ as stated in Theorem \ref{thm:lowerboundach}. We provide an example of calculating the minimum quantum degree using Maya diagrams in Example \ref{ex:finalex} and state a chain that yields a curve of minimum degree.
\begin{thm} \label{thm:lowerboundach}
Let $M^v$ and $M^w$ be a Maya diagram corresponding to $v,w \in W^{P}.$. Let $v_0:=v$. Let $\mathcal{C}$ be the chain in $W^{P}$ in terms of Maya diagrams given by
\begin{eqnarray*}
\mathcal{C}:M^{v_0} \overset{\mathcal{C}^{(a_0,b_0)}_{(v_0,w)}}{\longrightarrow} M^{v_1} \overset{\mathcal{C}^{(a_1,b_1)}_{(v_1,w)}}{\longrightarrow} M^{v_2} \overset{\mathcal{C}^{(a_2,b_2)}_{(v_{2},w)}}{\longrightarrow}\cdots \overset{\mathcal{C}^{(a_{r-1},b_{r-1})}_{(v_{r-1},w)}}{\longrightarrow} M^{v_r} 
\end{eqnarray*}
where $M^{v_j} \ngeq M^w$ for $1 \leq j \leq r-1$ and $M^{v_r} \geq M^w$. Then

\[ (\deg_1(v,w), \cdots, \deg_k(v,w))=\sum_{j=0}^{r-1} \deg \mathcal{C}^{(a_j,b_j)}_{(v_j,w)}. \]
\end{thm}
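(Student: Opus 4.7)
The plan is to establish the theorem by sandwiching $\deg_{\mathcal{C}}(v,w)$ between a lower bound and an upper bound that coincide with $(\deg_1(v,w), \ldots, \deg_k(v,w))$. Concretely, I would invoke the three preceding lemmas in sequence: Lemma \ref{lem:lbmdeg} provides a universal lower bound, Lemma \ref{lem:rimhookineq} provides a construction-specific upper bound, and Lemma \ref{lem:noprob2} identifies the upper bound with the lower bound componentwise.

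First, I would apply Lemma \ref{lem:lbmdeg} to the chain $\mathcal{C}$ described in the statement, which yields
\[
(\deg_1(v,w), \ldots, \deg_k(v,w)) \leq \deg_{\mathcal{C}}(v,w).
\]
Since this holds for \emph{any} chain from $v$ to $w$, it in particular holds for the rim-hook chain $\mathcal{C}$ under consideration.

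Next, I would apply Lemma \ref{lem:rimhookineq}, whose conclusion says precisely that the rim-hook chain $\mathcal{C}$ constructed by iterating $R_{(v_j,w)}$ satisfies
\[
\deg_{\mathcal{C}}(v,w) \leq \sum_{j=0}^{r-1} R_{(v_j,w)}.
\]
Finally, I would invoke Lemma \ref{lem:noprob2}, which asserts that the $j$th component of $\sum_{j=0}^{r-1}R_{(v_j,w)}$ is exactly $\deg_j(v,w)$. Stringing these together gives the sandwich
\[
(\deg_1(v,w), \ldots, \deg_k(v,w)) \leq \deg_{\mathcal{C}}(v,w) \leq \sum_{j=0}^{r-1} R_{(v_j,w)} = (\deg_1(v,w), \ldots, \deg_k(v,w)),
\]
forcing all three quantities to be equal, which is the claim.

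The main obstacle would ordinarily be verifying that the specific rim-hook chain realizes the componentwise minimum in each projection $\pi_j$ simultaneously, which is a nontrivial uniform statement; however, this is precisely the content of Lemma \ref{lem:noprob2}, whose proof rests on the technical Lemmas \ref{lem:noprob} and \ref{lem:noprob1} controlling how the generalized rim hook interacts with the Bruhat-incompatible rows. Once those lemmas are in hand, the present theorem is a pure assembly argument, and no further combinatorial work is needed beyond citing the three results and noting that $\leq$ for degree vectors is componentwise so that equality of the bounding vectors forces equality throughout.
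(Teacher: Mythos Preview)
Your proposal is correct and matches the paper's proof essentially line for line: the paper likewise invokes Lemma~\ref{lem:noprob2}, Lemma~\ref{lem:rimhookineq}, and Lemma~\ref{lem:lbmdeg} to obtain the componentwise sandwich $SR_j = \deg_j(v,w) \leq d_j\mathcal{C} \leq SR_j$ and concludes equality throughout. No additional ideas are needed.
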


\begin{proof}
First note that $\mathcal{C}$ exists since repeated applications of generalized $(a,b)$-rim hooks result in a Maya diagram corresponding to the longest element in $W^P$. By Lemma \ref{lem:noprob2},  $ \displaystyle \mbox{Comp}_y\mathcal{C} \leq \deg_y(v,w)$. By Lemma \ref{lem:lbmdeg} we have that $ \displaystyle \deg_{y}(v,w) \leq \mbox{Comp}_y\mathcal{C}$.   Then $\displaystyle \mbox{Comp}_y\mathcal{C} \leq \deg_y(v,w) \leq \mbox{Comp}_y\mathcal{C}.$ The result follows. \end{proof}

\begin{example} \label{ex:finalex} Here we give an example of Theorem \ref{thm:lowerboundach}. Consider the following two Maya diagrams.
\[ M^w=\scalebox{.75}{\ytableausetup{centertableaux}
\begin{ytableau}
\bx &  \bx &\bx  & x   &  \bx  &\bx&\bx & x &\bx  &\bx  &\bx &x& x \\
\bx &  x &x  &    &  \bx  &\bx&\bx &  &\bx  &\bx  &\bx &&  \\
\bx &    &   &    & \bx   &x  &x   &  &\bx  &\bx  &\bx &&\\
\bx &    &   &    & x   &   &    &  &\bx  &x    &x   &&\\
\bx &    &   &    &      &   &    &  &x    &x     &    &&\\
x   &    &   &    &       &   &    &  &     &     &    &&
\end{ytableau}\ytableausetup{centertableaux}}  \mbox{ and } 
  M^v=\scalebox{.75}{\ytableausetup{centertableaux}
\begin{ytableau}
\bx&\bx&\bx&x    & \bx     & x  & x &\bx &\bx  &\bx & \bx&x&\bx  \\
x&\bx&\bx&    & x     &   &  &\bx &\bx  &\bx & \bx&&\bx  \\
 &\bx&\bx&    &       &   &  &\bx & x   & \bx& x  &&\bx \\
 &\bx&\bx&    &       &   &  &\bx &     & x  &    &&x\\
 &\bx&x  &    &       &   &  &x   &     &    &    && \\
 &x  &   &    &       &   &  &  &     &    &    &&
\end{ytableau}\ytableausetup{centertableaux}}.\]

Then we have the following where $!$ indicate the Bruhat order incompatible positions and $\uparrow$ and $\downarrow$ describe the movement of the $x$'s.
\begin{eqnarray*}
\hspace{10pt}\scalebox{.75}{\ytableausetup{centertableaux}
\begin{ytableau}
\bx&\bx&\bx & x   & \bx     & x  & x &\bx &\bx  &\bx & \bx&x&\bx  \\
x&\bx&\bx &    & x     &   &  &\bx &\bx  &\bx & \bx&&\bx  \\
 &\bx&\bx!&    &       &   &  &\bx & x   & \bx& x  &&\bx \\
 &\bx&\bx!&    &       &   &  &\bx!&     & x  &    &&x\\
 &\bx& x! &    &       &   &  &x!  &     &    &    && \\
 &x  &    &    &       &   &  &    &     &    &    &&
\end{ytableau}\ytableausetup{centertableaux}} &\overset{(2,5)\mbox{-rim hook}}{\longrightarrow}&
\scalebox{.75}{\ytableausetup{centertableaux}
\begin{ytableau}
\bx&\bx& \bx       & x   & \bx & x  & x &\bx &\bx  &\bx & \bx         &x&\bx  \\
x&\bx& x       &    & \downarrow &   &  &\bx &\bx  &\bx & \bx         &&\bx  \\
 &\bx&\uparrow &    & x           &   &  &\bx & x   & \bx& \downarrow &&\bx \\
 &\bx&\uparrow &    &             &   &  &\bx&     & x  & x           && \downarrow \\
 &\bx&\uparrow&    &             &   &  &x  &     &    &             &&x \\
 &x  &         &    &             &   &  &    &     &    &             &&
\end{ytableau}\ytableausetup{centertableaux}}\\
\end{eqnarray*}
\begin{eqnarray*}
\hspace{0pt}=\scalebox{.75}{\ytableausetup{centertableaux}
\begin{ytableau}
\bx&\bx& \bx       & x   & \bx         & x  & x &\bx &\bx  &\bx & \bx        &x&\bx \\
x&\bx& x       &    & \bx         &   &  &\bx &\bx  &\bx & \bx        &&\bx \\
 &\bx&         &    & x           &   &  &\bx & x   & \bx& \bx       & &\bx \\
 &\bx&         &    &             &   &  &\bx&     & x  & x          & &\bx \\
 &\bx&         &    &             &   &  &x!  &     &    &           &  &x   \\
 &x  &         &    &             &   &  &   &     &    &            & &
\end{ytableau}\ytableausetup{centertableaux}}
\hspace{0pt}&\overset{(2,3)\mbox{-rim hook}}{\longrightarrow}&\hspace{0pt}\scalebox{.75}{\ytableausetup{centertableaux}
\begin{ytableau}
\bx&\bx& \bx       & x   & \bx         & x  & x &\bx &\bx  &\bx & \bx        &x&\bx \\
x&\bx& x       &    & \bx         &   &  &\bx &\bx  &\bx & \bx        &&\bx \\
 &\bx&         &    & x           &   &  &\bx & x   & \bx& \bx        &&\bx \\
 &\bx&         &    &             &   &  &x&     & x  & \downarrow          & &\bx \\
 &\bx&         &    &             &   &  &\uparrow  &     &   &  x          & &x   \\
 &x  &         &    &             &   &  &   &     &    &             &&
\end{ytableau}\ytableausetup{centertableaux}}\\
\end{eqnarray*}
\begin{eqnarray*}
&&\hspace{0pt}=\scalebox{.75}{\ytableausetup{centertableaux}
\begin{ytableau}
\bx&\bx& \bx       & x   & \bx         & x  & x &\bx &\bx  &\bx & \bx        &x&\bx \\
x&\bx& x       &    & \bx         &   &  &\bx &\bx  &\bx & \bx        &&\bx \\
 &\bx&         &    & x           &   &  &\bx & x   & \bx& \bx       & &\bx \\
 &\bx&         &    &             &   &  &x&     & x  & \bx          & &\bx \\
 &\bx&         &    &             &   &  &  &     &   &  x           &&x   \\
 &x  &         &    &             &   &  &   &     &    &            & &
\end{ytableau}\ytableausetup{centertableaux}}.
\end{eqnarray*}
Thus the minimum quantum degree that appears in $\sigma^v \star \sigma_w$ is 
\[ (0,1,1,1,0)+(0,1,0,0,0)=(0,2,1,1,0).\]

Following the process in Example \ref{ex:chain}, the precise chain to describe this curve is given by \[ vs_{e_2-e_5}s_{e_5-e_7}s_{e_8-e_9}s_{e_7-e_8}s_{e_3-e_5}.\]
\end{example}

\section{Conflict of Interest}
On behalf of all authors, the corresponding author states that there is no conflict of interest.

\bibliographystyle{halpha}
\bibliography{bibliography}

@article {buch.m:nbhds,
 AUTHOR = {Buch, Anders and Mihalcea, Leonardo C.},
     TITLE = {Curve neighborhoods of {S}chubert varieties},
   JOURNAL = {Journal of Differential Geometry},
  FJOURNAL = {Journal of Differential Geometry},
    VOLUME = {99},
      YEAR = {2015},
    NUMBER = {2},
     PAGES = {255--283},
      ISSN = {0022-040X},
   MRCLASS = {14L30 (14M17 14N35)},
MRREVIEWER = {Pierre-Emmanuel Chaput},
       URL = {http://projecteuclid.org/euclid.jdg/1421415563}
}

@article {FW,
    AUTHOR = {Fulton, William and Woodward, Chistopher},
     TITLE = {On the quantum product of {S}chubert classes},
   JOURNAL = {Journal of Algebraic Geometry},
  FJOURNAL = {Journal of Algebraic Geometry},
    VOLUME = {13},
      YEAR = {2004},
    NUMBER = {4},
     PAGES = {641--661},
      ISSN = {1056-3911},
   MRCLASS = {14N35 (14M15 14N15)},
MRREVIEWER = {Harry Tamvakis},
       DOI = {10.1090/S1056-3911-04-00365-0},
       URL = {http://dx.doi.org/10.1090/S1056-3911-04-00365-0},
}

@article {buch,
    AUTHOR = {Buch, Anders },
     TITLE = {Quantum cohomology of {G}rassmannians},
   JOURNAL = {Compositio Math.},
  FJOURNAL = {Compositio Mathematica},
    VOLUME = {137},
      YEAR = {2003},
    NUMBER = {2},
     PAGES = {227--235},
      ISSN = {0010-437X},
   MRCLASS = {14N35 (05E05 14M15)},
MRREVIEWER = {Harry Tamvakis},
       DOI = {10.1023/A:1023908007545},
       URL = {http://dx.doi.org/10.1023/A:1023908007545},
}

@article {buch:partial,
    AUTHOR = {Buch, Anders},
     TITLE = {Quantum cohomology of partial flag manifolds},
   JOURNAL = {Transactions of the American Mathematical Society},
  FJOURNAL = {Transactions of the American Mathematical Society},
    VOLUME = {357},
      YEAR = {2005},
    NUMBER = {2},
     PAGES = {443--458},
      ISSN = {0002-9947},
   MRCLASS = {14N35 (14M15)},
MRREVIEWER = {Andrew Kresch},
       DOI = {10.1090/S0002-9947-04-03655-4},
       URL = {http://dx.doi.org/10.1090/S0002-9947-04-03655-4},
}

@article {postnikov:qbruhat,
	AUTHOR = {Postnikov, Alexander},
	TITLE = {Quantum {B}ruhat graph and {S}chubert polynomials},
	JOURNAL = {Proceedings of the American Mathematical Society},
	FJOURNAL = {Proceedings of the American Mathematical Society},
	VOLUME = {133},
	YEAR = {2005},
	NUMBER = {3},
	PAGES = {699--709},
	ISSN = {0002-9939},
	MRCLASS = {05E05 (05E15 14M15 14N15)},
	MRREVIEWER = {Ioannis Emmanouil},
	DOI = {10.1090/S0002-9939-04-07614-2},
	URL = {https://doi.org/10.1090/S0002-9939-04-07614-2},
}

@article {postnikov:affine,
    AUTHOR = {Postnikov, Alexander},
     TITLE = {Affine approach to quantum {S}chubert calculus},
   JOURNAL = {Duke Mathematical Journal},
  FJOURNAL = {Duke Mathematical Journal},
    VOLUME = {128},
      YEAR = {2005},
    NUMBER = {3},
     PAGES = {473--509},
      ISSN = {0012-7094},
   MRCLASS = {05E05 (14M15 14N15 14N35)},
MRREVIEWER = {Harry Tamvakis},
       DOI = {10.1215/S0012-7094-04-12832-5},
       URL = {https://doi.org/10.1215/S0012-7094-04-12832-5},
}

@article{bar,
author = {Christoph Bärligea},
title = {Curve neighborhoods and minimal degrees in quantum products},
journal = {Communications in Algebra},
volume = {50},
number = {1},
pages = {207--237},
year = {2022},
publisher = {Taylor \& Francis},
doi = {10.1080/00927872.2021.1955907},
}

@article{clarkson2019cyclic,
author = {Clarkson, Peter and Gomez-Ullate, David and Grandati, Yves and Milson, Robert},
year = {2020},
month = {01},
pages = {},
title = {Cyclic Maya diagrams and rational solutions of higher order Painlevé systems},
volume = {144},
journal = {Studies in Applied Mathematics},
doi = {10.1111/sapm.12300}
}

@article{ShiflerWithrow,
    title={Minimum Quantum Degrees for Isotropic {G}rassmannians in {T}ypes {B} and {C} },
    author={Ryan M. Shifler and Camron Withrow},
    year={2020},
    journal={Annals of Combinatorics},
    volume={26},
    number={2},
    pages={453--480},
}

@article {RL,
    AUTHOR = {Jordan Lambert and Lonardo Rabelo},
     TITLE = {Covering relations of $k$-{G}rassmannian permutations in type {B}},
   JOURNAL = {Australasian Journal of Combinatorics},
  FJOURNAL = {Australasian Journal of Combinatorics},
      YEAR = {2019},
    NUMBER = {75(1)},
     PAGES = {73-95},
}

@article {ProctBO,
    AUTHOR = {Robert A. Proctor},
     TITLE = {Classical {B}ruhat Orders and Lexicographic Shellability},
   JOURNAL = {Journal of Algebra},
  FJOURNAL = {Journal of Algebra},
      YEAR = {1982},
    NUMBER = {77},
     PAGES = {104-126},
}

@article{BCLM:EulerChar,
    title={Euler characteristics in the quantum $K$-theory of flag varieties},
    author={Anders Buch and Sjuvon Chung and Changzheng Li and Leonardo C. Mihalcea},
    year={2020},
   journal={Selecta Mathematica},
    volume={26},
    number={2},
    pages={29},
}

@article {yong,
    AUTHOR = {Alexander Yong},
     TITLE = {Degree Bounds in Quantum {S}chubert Calculus},
   JOURNAL = {Proceedings of the American Mathematical Society},
  FJOURNAL = {Proceedings of the American Mathematical Society},
    VOLUME = {131},
      YEAR = {2003},
    NUMBER = {9},
     PAGES = {2649--2655},
}

@article {Belkale,
    AUTHOR = {Belkale, Prakash},
     TITLE = {Transformation formulas in quantum cohomology},
   JOURNAL = {Compositio Mathematica},
  FJOURNAL = {Compositio Mathematica},
    VOLUME = {140},
      YEAR = {2004},
    NUMBER = {3},
     PAGES = {778--792},
}

@incollection{DATE1989149,
title = {Paths, Maya Diagrams and representations of $\hat{s}l (r, C)$},
editor = {M. Jimbo and T. Miwa and A. Tsuchiya},
booktitle = {Integrable Sys Quantum Field Theory},
publisher = {Academic Press},
address = {San Diego},
pages = {149-191},
year = {1989},
isbn = {978-0-12-385342-4},
doi = {https://doi.org/10.1016/B978-0-12-385342-4.50012-3},
url = {https://www.sciencedirect.com/science/article/pii/B9780123853424500123},
author = {Etsuro Date and Michio Jimbo and Atsuo Kuniba and Tetsuji Miwa and Masato Okado},
}

@article{BERTRAM1999728,
title = {Quantum Multiplication of {S}chur Polynomials},
journal = {Journal of Algebra},
volume = {219},
number = {2},
pages = {728-746},
year = {1999},
issn = {0021-8693},
doi = {https://doi.org/10.1006/jabr.1999.7960},
url = {https://www.sciencedirect.com/science/article/pii/S0021869399979607},
author = {Aaron Bertram and IonuÅ£ Ciocan-Fontanine and William Fulton}
}

\end{document}